\numberwithin{equation}{section}
\newtheorem{Theorem}{Theorem}[section]
\newtheorem{Condition}[Theorem]{Condition}
\newtheorem*{ConditionN}{Condition~\ref{cond1}$'$}
\newtheorem{Lemma}[Theorem]{Lemma}
\newtheorem{Proposition}[Theorem]{Proposition}
 { \theoremstyle{definition}
\newtheorem{Definition}[Theorem]{Definition}
\newtheorem{Example}[Theorem]{Example}
\newtheorem{Remark}[Theorem]{Remark} }
\begin{document}

\allowdisplaybreaks

\newcommand{\arXivNumber}{1909.13211}

\renewcommand{\thefootnote}{}

\renewcommand{\PaperNumber}{009}

\FirstPageHeading

\ShortArticleName{New Examples of Irreducible Local Diffusion of Hyperbolic PDE's}

\ArticleName{New Examples of Irreducible Local Diffusion\\ of Hyperbolic PDE's\footnote{This paper is a~contribution to the Special Issue on Algebra, Topology, and Dynamics in Interaction in honor of Dmitry Fuchs. The full collection is available at \href{https://www.emis.de/journals/SIGMA/Fuchs.html}{https://www.emis.de/journals/SIGMA/Fuchs.html}}}

\Author{Victor A. VASSILIEV~$^{\dag\ddag}$}

\AuthorNameForHeading{V.A.~Vassiliev}

\Address{$^\dag$~Steklov Mathematical Institute of Russian Academy of Sciences, Moscow, Russia}
\EmailD{\href{mailto:vva@mi-ras.ru}{vva@mi-ras.ru}}
\URLaddressD{\url{http://www.mi-ras.ru/~vva/}}

\Address{$^\ddag$~National Research University Higher School of Economics, Moscow, Russia}

\ArticleDates{Received September 29, 2019, in final form February 18, 2020; Published online February 24, 2020}

\Abstract{Local diffusion of strictly hyperbolic higher-order PDE's with constant coefficients at all {\em simple} singularities of corresponding wavefronts can be explained and recognized by only two local geometrical features of these wavefronts. We radically disprove the obvious conjecture extending this fact to arbitrary singularities: namely, we present examples of diffusion at all non-simple singularity classes of generic wavefronts in odd-dimensional spaces, which are not reducible to diffusion at simple singular points.}

\Keywords{wavefront; discriminant; critical point; morsification; vanishing cycle; hyperbolic PDE; fundamental solution; lacuna; sharp front; diffusion; Petrovskii condition}

\Classification{35L67; 58K05; 32-04}

\renewcommand{\thefootnote}{\arabic{footnote}}
\setcounter{footnote}{0}

\section{Introduction}
This is a work in {\em lacuna theory} of hyperbolic PDE's initiated by I.G.~Petrovskii~\cite{Petrovskii45} and expanded significantly by J.~Leray~\cite{Leray}, M.~Atiyah, R.~Bott and L.~G{\aa}rding \cite{ABG70, ABG73, Gording77}, and others. However, the methods of the work belong mainly to singularity theory of differentiable functions.

There are two known sources of local diffusion (i.e., the local irregularity of continuations) of solutions of strictly hyperbolic partial differential equations with constant coefficients: non-singular points of wavefronts, at which the Davydova signature condition fails, and cuspidal edges at which the investigated local connected component of the complement of a wavefront is the ``bigger'' one. All cases of local diffusion of waves at {\em simple} (i.e., of classes $A_k$, $D_k$, $E_6$, $E_7$ or $E_8$) singular points of wavefronts of generic hyperbolic PDE's can be reduced to these two: the diffusion arises in such a component in a neighbourhood of a simple singularity if and only if the boundary of this component
contains points of one of these two basic types. In particular, this is true for all generic hyperbolic PDE's in the $\leq 7$-dimensional spaces.

We show that for non-simple singular points of wavefronts in ${\mathbb R}^N$, $N$ odd, this is not more the case. Namely, we indicate local components of complements of generic wavefronts at their points of
all ``parabolic'' singularity classes $P_8$, $X_9$, and $J_{10}$, such that solutions of the corresponding PDE's have diffusion at the most singular points of their boundaries only (i.e., at the points of these parabolic types), but are sharp at all simpler points.
The singularities of these three classes occur close to all other non-simple singularities of generic wavefronts, therefore such additional examples of diffusion also occur at all of them.

The proofs use a program counting topological types of morsifications of critical points of real analytic functions.

\subsection{Hyperbolic PDE's of higher orders (after \cite{ABG70, ABG73}, see also \cite{GP,APLT})}

Let $F$ be a linear partial differential operator with constant coefficients: $F$ has the form of a~polynomial (generally with complex coefficients) in variables $\partial/\partial x_j$ where $x_j$ are the coordinates in~${\mathbb R}^N$. It is convenient to identify these variables $\partial/\partial x_j$ with coordinates~$\xi_j$ in dual space~$\check {\mathbb R}^N$ of our space ${\mathbb R}^N$, with pairing operation \begin{gather*}
\langle (x_1, \dots, x_N),(\xi_1, \dots, \xi_N)\rangle =\sum_{j=1}^N x_j \xi_j.
\end{gather*}
The projectivization of this dual space is denoted by $\check {\mathbb R}P^{N-1}$; its $k$-dimensional subspaces correspond via the {\em projective duality} to $(N-k-2)$-dimensional subspaces in ${\mathbb R}P^{N-1}$.

Let $d$ be the degree of this polynomial $F$.
The following {\em Cauchy problems} in the half-space ${\mathbb R}^N_+= \{x\,|\,x_1>0\}$ for this operator are considered. Given any regular function $\varphi$ in this half-space, and a collection of $d$ regular functions $\psi_k$, $k=0, 1, \dots, d-1,$ defined on its boundary hyperplane ${\mathbb R}^{N-1}_1 \equiv \{x_1 =0\}$ of this half-space, the task is to find a function $u$ in ${\mathbb R}^N_+$, such that $F(u) \equiv \varphi$ in ${\mathbb R}^N_+$, and $\frac{\partial^k}{\partial x_1^k} u \equiv \psi_k$ on hyperplane ${\mathbb R}^{N-1}_1$ for any $k=0, \dots, d-1$.

A {\em fundamental solution} of our operator $F$ is any distribution in ${\mathbb R}^N$ such that this operator applied to it is equal to the delta function at $0$. Operator $F$ is called {\em hyperbolic} (or sometimes {\em hyperbolic in the sense of Petrovskii}) if it admits a fundamental solution whose support belongs to a proper cone $S(F)$ in the half-space ${\mathbb R}^N_+$ with the vertex $0$ (i.e., this vertex is the unique intersection point of this cone with ${\mathbb R}^{N-1}_1$).
Such a fundamental solution (if it exists) is uniquely determined by $F$; it is called {\em principal fundamental solution} of $F$ and is denoted by $E(F)$. It may be considered as a (generalized) wave caused by an instant pointwise perturbation and propagating in the future ($x_1>0$) part of our space-time only.

If $F$ is hyperbolic, then any Cauchy problem of the described form has a unique solution, which is regular and depends continuously on the data $\{\varphi; \psi_0, \dots, \psi_{d-1}\}$ of the problem; moreover, the value of this solution at any point $x \in {\mathbb R}^N_+$ depends only on the values of these data in the capsized cone $x-S(F)$ with the vertex at point $x$. Indeed, such a solution is provided by the convolution of $E(F)$ with the data of our problem.

The hyperbolicity property implies strong conditions on the {\em principal symbol} of $F$ (i.e.,
the homogeneous part $F_d$ of highest degree of polynomial $F$). First of all, if the operator $F$ is hyperbolic then this principal part is necessarily a real (up to a constant factor) polynomial; therefore studying solutions of the corresponding equation we can and will assume that it is just real. Further, the cone $A(F) \subset \check {\mathbb R}^N$ of zeros of this principal part $F_d$ should have exactly $d$ real intersection points (maybe counted with multiplicities) with any line in $\check {\mathbb R}^N$ parallel to the $\xi_1$ axis (i.e., to the line orthogonal to the hyperplane ${\mathbb R}^{N-1}_1$).

If this cone $A(F)$ is a non-singular manifold outside the origin in $\check {\mathbb R}^N$, then this condition is also sufficient for hyperbolicity of entire operator $F$. Otherwise some additional conditions on the lower terms of $F$ should be satisfied.

Hyperbolic operators with such smooth cones $A(F)$ are called {\em strictly hyperbolic}. They form a dense subset in the space of all hyperbolic operators. We consider in this work only strictly hyperbolic operators.

The principal fundamental solution of any hyperbolic operator is regular (i.e., locally coincides with appropriate non-singular analytic functions) everywhere outside the {\em wavefront} $W(F)$, which is a conic (i.e., invariant under positive dilations) closed semi-algebraic subvariety of positive codimension in ${\mathbb R}^N_+$; moreover the support of this principal fundamental solution lies in the convex hull of the wavefront.

Namely, the wavefront of a {\em strictly} hyperbolic operator consists of points $x \in {\mathbb R}^N_+$ such that their orthogonal hyperplanes in the space $\check {\mathbb R}^N$ are tangent to the cone $A(F)$. For general hyperbolic operators also some points corresponding to the hyperplanes ``not in general position'' with the singular set of $A(F)$ should be added.

Given a point $x \in W(F)$, the principal fundamental solution $E(F)$ is called {\em holomorphically sharp} in a local (close to $x$) connected component $C$ of the regularity domain ${\mathbb R}^N \setminus W(F)$ if there is a smooth analytic function in a neighbourhood of $x$ in ${\mathbb R}^N$ coinciding with $E(F)$ in the intersection of this neighbourhood and this component $C$. $E(F)$ is {\em $C^\infty$-sharp} in such a~component if it can be extended to a $C^\infty$-smooth function on the closure of this component.

\begin{Example}
The wavefront of the standard wave operator
\begin{gather*}
\frac{\partial^2}{\partial x_1^2} - c^2\left(\frac{\partial^2}{\partial x_2^2} +\frac{\partial^2}{\partial x_3^2} \right)
\end{gather*}
 in ${\mathbb R}^3$ is the cone
\begin{gather*}c^2x_1^2= x_2^2+x_3^2, \qquad x_1 \geq 0;
\end{gather*}
 the principal fundamental solution of this operator is sharp at all its points in the ``exterior'' component of the complement of $W(F)$, and is not in the interior one (growing there asymptotically as $\text{(distance from the wavefront)}^{-1/2}$ close to the regular points of this cone). On the contrary, principal fundamental solutions of wave operators in space-time of even dimension greater than~2 are sharp from both sides of the cone.
\end{Example}

A local component of ${\mathbb R}^N \setminus W(F)$, in which the fundamental solution is sharp, is called a~(holomorphic or $C^\infty$-) {\em local lacuna}. The negation of sharpness is called a {\em diffusion} of waves.

 \begin{Remark}[on terminology] Roughly, elementary wave~$E(F)$ is sharp at a point of its front, if its singularity behaves like a squall, not predictable by the behaviour of the regular part of the wave before meeting with it. Diffusion is an opposite situation: in this case the shock front spreads some signs of its occurrence around it. The word ``lacuna'' was originally applied by Petrovskii to the domains where the fundamental solution is just equal to zero; so it indicated also the areas in~${\mathbb R}^N_+$ and~${\mathbb R}^{N-1}_1$ such that the values of the data of Cauchy problem in them are not important for the behavior of the solution of the problem in a given point~$x$. The {\em local} lacunas were introduced in \cite{ABG70, ABG73}.
\end{Remark}

Wavefronts can be singular along the rays in ${\mathbb R}^N_+$, which correspond by projective duality to {\it parabolic points} of the projectivization $A^*(F) \subset \check {\mathbb R}P^{N-1}$ of the cone $A(F) \subset \check {\mathbb R}^N$, i.e., the points at which the second fundamental form of this hypersurface in $\check {\mathbb R}P^{N-1}$ is degenerate.

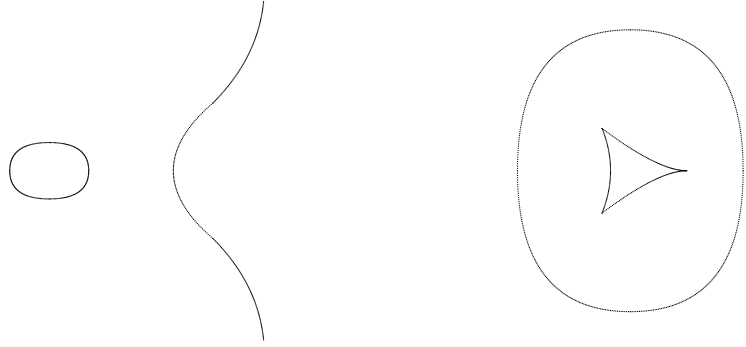
\begin{figure}\centering
\unitlength 0.75mm
\linethickness{0.4pt}
\begin{picture}(130,62)
\bezier{60}(0,30)(0,35)(7,35)
\bezier{60}(7,35)(14,35)(14,30)
\bezier{60}(14,30)(14,25)(7,25)
\bezier{60}(7,25)(0,25)(0,30)
\bezier{100}(45,0)(44,10)(36,18)
\bezier{100}(36,18)(22,30)(36,42)
\bezier{100}(45,60)(44,50)(36,42)
\bezier{75}(105,22.5)(108,30)(105,37.5)
\bezier{75}(105,37.5)(115,30)(120,30)
\bezier{75}(120,30)(115,30)(105,22.5)
\bezier{120}(110,5)(130,5)(130,30)
\bezier{120}(110,5)(90,5)(90,30)
\bezier{120}(110,55)(130,55)(130,30)
\bezier{120}(110,55)(90,55)(90,30)
\end{picture}
\caption{Projectivized zero set and wavefront of a cubic operator in ${\mathbb R}^3$.}\label{cubic}
\end{figure}

\begin{Example} The projectivization of the zero set of a non-singular homogeneous polynomial in $\check {\mathbb R}^3$ is a cubic curve in the projective plane $\check {\mathbb R}P^2$, which can consist either of two components as in Fig.~\ref{cubic} (left) or only the right-hand one of these components. The one-component case never is hyperbolic, and the two-component one is hyperbolic if the $\xi_1$-axis is directed inside the domain bounded by its left-hand component. The wavefront of the latter operator looks as a~cone in~${\mathbb R}^3_+$ whose image in ${\mathbb R}P^2$ is shown in Fig.~\ref{cubic} (right): its three singular points come from three inflection points of the cubic curve (one of which is in our picture placed at infinity).
\end{Example}

\begin{figure}\centering
\unitlength 1.00mm
\linethickness{0.4pt}
\begin{picture}(129.00,58.00)
\bezier{96}(0.00,26.00)(0.33,16.00)(10.00,6.00)
\bezier{96}(35.67,27.67)(36.00,17.67)(45.67,7.67)
\bezier{16}(35.33,24.00)(35.00,21.33)(34.33,19.67)
\bezier{16}(32.67,16.33)(32.00,14.67)(30.67,13.33)
\bezier{56}(-4.00,13.00)(-1.00,17.00)(00.00,26.00)
\bezier{108}(45.67,7.67)(29.67,10.67)(19.33,10.67)
\bezier{56}(19.33,10.67)(10.33,10.67)(10.00,6.00)
\bezier{108}(35.67,27.67)(19.67,30.67)(9.33,30.67)
\bezier{56}(9.33,30.67)(0.33,30.67)(0.00,26.00)
\bezier{24}(30.67,13.33)(28.00,14.00)(25.00,14.50)
\bezier{24}(22.33,14.85)(20.00,15.33)(16.33,15.67)
\bezier{20}(13.33,16.00)(11.67,16.33)(8.00,16.33)
\bezier{28}(3.33,16.33)(-0.33,16.33)(-3.33,14.67)
\bezier{72}(113.00,25.00)(104.00,23.00)(95.00,25.00)
\bezier{100}(95.00,25.00)(105.00,32.00)(109.80,42.55)
\put(110.10,43.00){\circle{1.00}}
\bezier{76}(110.00,42.55)(109.00,30.00)(113.00,25.00)
\bezier{152}(113.00,25.00)(94.00,18.00)(83.00,4.00)
\bezier{140}(95.00,25.00)(113.00,16.00)(120.00,3.00)
\bezier{208}(120.00,3.00)(122.00,34.00)(129.00,54.00)
\bezier{204}(92.00,54.00)(85.00,24.00)(83.00,4.00)
\bezier{152}(92.00,54.00)(116.00,59.00)(129.00,54.00)
\bezier{12}(102.75,20.45)(103.33,22.00)(103.67,23.33)
\bezier{24}(104.33,25.33)(105.33,28.33)(106.10,31.00)
\bezier{16}(106.67,32.67)(107.00,33.33)(108.00,36.67)
\bezier{12}(108.67,38.67)(109.33,40.67)(109.67,41.67)
\put(94.00,28.00){\makebox(0,0)[cc]{$A_2$}}
\put(115.00,28.00){\makebox(0,0)[cc]{$A_2$}}
\put(113.00,45.00){\makebox(0,0)[cc]{$A_3$}}
\put(10.00,33.50){\makebox(0,0)[cc]{$A_2$}}
\put(101.00,11.00){\makebox(0,0)[cc]{$3$}}
\put(87.00,49.00){\makebox(0,0)[cc]{$1$}}
\put(77,22){$2$}
\put(80,23){\vector(1,0){22}}
\end{picture}
\caption{Cuspidal edge and swallowtail.}\label{swallowtail}
\end{figure}
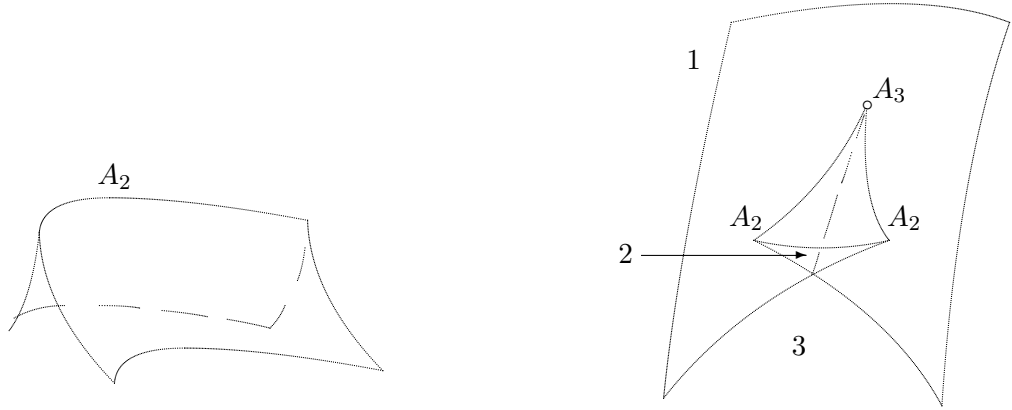

The properties of sharpness and diffusion at the points of a wavefront are invariant under dilations of ${\mathbb R}^N$. Therefore we will consider projectivizations $W^*(F) \subset {\mathbb R}P^{N-1}$ of wavefronts and speak on the diffusion or sharpness in a local connected component of ${\mathbb R}P^{N-1} \setminus W^*(F)$ if it holds in the preimage of this component under the obvious projection ${\mathbb R}^N_+ \setminus W(F) \to {\mathbb R}P^{N-1} \setminus W^*(F)$. These properties are related with the local geometry of the projectivized wavefront $W^*(F)$, which by construction is the variety projective dual to the projectivization $A^*(F) \subset \check {\mathbb R}P^{N-1}$ of the cone~$A(F)$. Two basic types of points of wavefronts spreading the diffusion in such components are described in the following two subsections.

A classification of singular points of projectivized wavefronts of strictly hyperbolic operators (and, more generally, of hypersurfaces projective dual to smooth ones) coincides with a classification of parabolic points of smooth hypersurfaces, and hence with a classification of critical points of smooth functions, see, e.g.,~\cite{AVG82}. The relation between them is provided by {\em generating functions} of these fronts, see Section~\ref{go} below. In particular, non-singular points of wavefronts correspond to non-parabolic points of $A^*(F)$ and to Morse critical points of generating functions, while the simplest standard singularities, {\em semicubic cuspidal edges} and {\em swallowtails}, shown correspondingly in the left and the right parts of Fig.~\ref{swallowtail}, are related with critical points of types~$A_2$ and~$A_3$. For the surfaces in ${\mathbb R}P^{N-1}$ with arbitrary $N\geq 4$, these pictures should be multiplied by a piece of ${\mathbb R}^{N-4}$. The singularities shown in Fig.~\ref{cubic} (right) for case $N=3$ have type~$A_2$: the cubic inflection points of curve $A^*(F)$ with local equation $\xi_0 = \xi_1^3 + O\big(\xi_1^4\big)$ in $\check {\mathbb R}P^2$ result in cusps of projective dual curve, which are locally ambient diffeomorphic to the semicubic parabola $\big\{x^3 + y^2 =0\big\}$; they appear as transversal slices of the picture of Fig.~\ref{swallowtail}.

\subsection{Davydova condition at non-singular points of a wavefront} \label{db}

A non-singular piece of a projectivized wavefront locally divides ${\mathbb R}P^{N-1}$ into two parts. Choose a normal direction to this wavefront $W^*(F)$ at a point of such a piece and consider the second fundamental form of the wavefront with respect to this direction. This quadratic form in $N-2$ variables is non-degenerate as the operator is strictly hyperbolic.

\begin{Definition}The {\em Davydova condition} \cite{Dav} is satisfied if the positive inertia index of this quadratic form is even.
\end{Definition}

If this condition is not satisfied, then we surely have local diffusion in the component of the complement of the wavefront, indicated by the chosen normal direction (see \cite{Dav}). Moreover, the analytic continuation of the fundamental solution from this component to a neighbourhood of such a point of the wavefront in ${\mathbb C}^N$ has a two-fold ramification along the (complexified) wavefront. Conversely, if the Davydova condition holds, then the wavefront is sharp in this local component: this was proved in \cite{Bor} by analytic estimates, and explained in \cite{ABG73} in terms of monodromy and removable singularities.

\subsection{Cuspidal edges and generating functions} \label{go}

The simplest singularities of a wavefront in ${\mathbb R}P^{N-1}$ are its {\em cuspidal edges}, at whose points it is locally ambient diffeomorphic to the product of the semicubical parabola and ${\mathbb R}^{N-3}$, see Figs. \ref{cubic} (right) and \ref{swallowtail} (left). Space ${\mathbb R}P^{N-1}$ is locally divided by such a hypersurface into two parts, one of which is ``bigger'', and the other one ``smaller''. According to \cite{Gording77}, the principal fundamental solution is never sharp in the bigger component, but can be sharp in the smaller one in some additional conditions. To describe these conditions, we need the following notion of generating functions of wavefronts.

Let $X\in {\mathbb R}P^{N-1}$ be a point of a projectivized wavefront. The corresponding hyperplane in~$\check {\mathbb R}P^{N-1}$ (i.e., projectivization of the hyperplane in~$\check {\mathbb R}^N$ orthogonal to the line $\{X\} \subset {\mathbb R}^N$) is tangent to projectivized zero set $A^*(F) $ of the principal symbol of $F$ at a point $\xi$. Choose affine co\-or\-di\-nates $(\xi_0, \dots, \xi_{N-2})$ in $\check {\mathbb R}P^{N-1}$ with the origin in this point in such a way that this tangent plane is defined by equation $\xi_0=0$. Hypersurface~$A^*(F)$ can be then specified close to~$\xi$ by an equation of the form
\begin{gather}\label{genfun}
\xi_0=f(\xi_1, \dots, \xi_{N-2}),
\end{gather}
where $f(0)=0$, ${\rm d}f(0)=0.$ Variables $\xi_1, \dots, \xi_{N-2}$ form a system of local coordinates on $A^*(F)$. The projective duality map $A^*(F) \to {\mathbb R}P^{N-1}$, and also the germ of its image at~$X$, are then determined by our function~$f$: for any point of the variety $A^*(F)$ we draw the tangent hyperplane at this point, and mark the corresponding point in~${\mathbb R}P^{N-1}$.

\begin{Definition}\label{genf}The function germ~$f$ defined in this way is called a {\em projective generating function} of the wavefront at point $X \in {\mathbb R}P^{N-1}$ corresponding to the hyperplane $\{\xi_0=0\} \subset \check {\mathbb R}P^{N-1}$.
\end{Definition}

It follows easily from this definition that any two generating functions of the same germ of the wavefront, defined by different choices of affine coordinates $\xi_i$, can be obtained one from the other by a diffeomorphism of space of arguments and a dilation of the target line by a non-zero (maybe negative) coefficient.

\begin{Remark} If occasionally the hyperplane corresponding to $X$ is tangent to hypersur\-fa\-ce~$A^*(F)$ at several its points, then such a map is defined in a neighbourhood of any of these points; the germ of the wavefront at point~$X$ will then consist of several locally irreducible branches corresponding to all these tangency points and described by their projective generating functions.
\end{Remark}

If generating function $f$ is Morse, then the corresponding piece of the wavefront is smooth (and was considered in the previous subsection). A cuspidal edge of the wavefront occurs when~$f$ has the simplest non-Morse singularity of type~$A_2$, see, e.g., \cite{ALGV, AVG82}. According to L.~G{\aa}rding~\cite{Gording77}, a wavefront is sharp at its cuspidal edge if and only if dimension~$N$ is odd, the inertia indices of the quadratic part of the function germ $f$ are even, and the investigated component of the complement of the wavefront is the ``smaller'' one. (The rank of the quadratic part of a function of this class depending on $N-2$ variables is equal to $N-3$, hence in the case of odd $N$ both inertia indices are of the same parity.) For example, these conditions are satisfied for the interior component in Fig.~\ref{cubic} (right).

\begin{Remark}\looseness=1 The part ``only if'' of this G{\aa}rding's result claims that diffusion at cuspidal edges appears in some seven cases, depending on the parities of~$N$ and inertia indices of the quadratic part of~$f$, and on the choice of a component. In some six out of these seven cases, diffusion follows already from the Davydova's obstruction: we can indicate a smooth piece of wavefront approaching the cuspidal edge, such that sharpness in our component fails at the points of this piece. The only exceptional case is that of odd~$N$ and odd inertia indices of~$f$: in this case the wavefront is sharp for the ``bigger'' component at all neighboring smooth points of its surface, however regularity of the fundamental solution fails when we approach its singular stratum.
\end{Remark}

\subsection{Other simple singularities}

The most common equivalence relation of singularity classes of germs of functions $\big({\mathbb R}^n,0\big) \to {\mathbb R}$ is provided by the group $\operatorname{Diff}_0$ of local diffeomorphisms $\big({\mathbb R}^n, 0\big) \to \big({\mathbb R}^n,0\big)$ acting on functions by composition with these diffeomorphisms. The space of function germs splits into the orbits of this action. This splitting is not discrete: some singularity classes can depend on continuous parameters separating continuously many orbits of this action.

\looseness=1 A natural primary segment of the classification of singularities of smooth functions (and hence also of wavefronts) is formed by so-called {\em simple singularities} $A_k$, $D_k$, $E_6$, $E_7$ or~$E_8$, see~\cite{AVG82}. For this part of the classification the splitting into the $\operatorname{Diff}_0$-orbits is still discrete: by definition these are the singularities such that a neighbourhood of any of them in the function space is covered by representatives of only finitely many orbits of this action. The name ``simple'' and canonical notation of these classes come from a deep and multiform relation of these classes with the simple Lie algebras. All local lacunas close to singular points of wavefronts of these types were counted in~\cite{Vassiliev86} (except for cases~$A_2$ and~$A_3$ of cuspidal edges and swallowtails, which were in detail described by L.~G{\aa}rding~\cite{Gording77}). In~\cite{Vassiliev92} an easy geometric characterization of these lacunas was found: it turned out that all cases of the diffusion close to any simple singular points of generic wavefronts can be reduced to the two cases described above. Namely, the following theorem was proved there (the technical notion of projective versality used in it and satisfied for generic wavefronts will be defined in Section~\ref{prove}).

\begin{Theorem}[see \cite{Vassiliev92}] \label{v92}
If projectivized wavefront $W^*(F)$ of a strictly hyperbolic operator $F$ has a simple singularity at its point $X$ and is {\em projective versal} at this point, then the principal fundamental solution of $F$ is sharp at point $X$ in a certain local connected component of the complement of $W^*(F)$ if and only if
\begin{enumerate}\itemsep=0pt
\item[$1)$] the Davydova condition is satisfied at all smooth point of $W^*(F)$ in the boundary of this component, and
\item[$2)$]this boundary does not contain the cuspidal edges of $W^*(F)$, close to which our component is the ``bigger'' one.
\end{enumerate}
\end{Theorem}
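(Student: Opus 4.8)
The plan is to convert the analytic statement about sharpness into a topological one --- the vanishing of a local homology class --- and then to settle it by an explicit computation over the simple classes. First I would pass to the local model afforded by Definition~\ref{genf}: the germ of $W^*(F)$ at $X$ is the wave front determined by a projective generating function $f\colon\big(\mathbb{R}^{N-2},0\big)\to(\mathbb{R},0)$, and projective versality of $W^*(F)$ at $X$ (defined in Section~\ref{prove}) amounts to the family of functions $f(\xi)-p_0-\sum_{i=1}^{N-2}p_i\xi_i$, with parameter $p=(p_0,\dots,p_{N-2})$, being a versal deformation of the critical point of $f$. Hence the germ of the pair $\big(\mathbb{R}P^{N-1},W^*(F)\big)$ at $X$ is diffeomorphic to the germ at the origin of the pair (base of a versal deformation, discriminant) of a critical point of one of the simple types $A_k$, $D_k$, $E_6$, $E_7$, $E_8$; in particular the local connected components of $\mathbb{R}P^{N-1}\setminus W^*(F)$ at $X$ are identified with the chambers of the complement of that discriminant.

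Second, I would combine the Herglotz--Petrovskii--Leray integral representation of $E(F)$ with the Atiyah--Bott--G{\aa}rding local theory of \cite{ABG70, ABG73}: for $x$ in a local component $C$ the value $E(F)(x)$ equals, modulo an everywhere-regular term, the integral of a fixed rational $(N-1)$-form with poles along the complexification of $A^*(F)$ over a cycle $h(x)$, and holomorphic sharpness of $E(F)$ in $C$ near $X$ is equivalent to the vanishing of a \emph{local Petrovskii class} $\nabla(C)$ attached canonically to the chamber $C$ in the homology of the local complement of the complexified wavefront --- equivalently, a class built from $C$ in the Milnor lattice $\widetilde{H}_{N-3}(V_f)\cong\mathbb{Z}^{\mu}$ of $f$, with its mod-$2$ reduction governing $C^{\infty}$-sharpness. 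For a Morse generating function $\nabla(C)$ is Davydova's class and $\nabla(C)=0$ is exactly the Davydova condition of Section~\ref{db}; for a generating function of type $A_2$ it is G{\aa}rding's class, vanishing exactly in the odd-$N$, even-index, smaller-component case of Section~\ref{go}. Thus the two elementary obstructions in the theorem are precisely the obstructions to $\nabla(C)=0$ contributed by the boundary strata of $C$ of smallest codimension.

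Third --- the heart of the matter --- I would compute $\nabla(C)$ for every chamber of every simple front and for both parities of $N$. Under a real morsification $\widetilde f$ of $f$, whose front is an arrangement of smooth sheets meeting along cuspidal edges of type $A_2$, the chamber $C$ becomes a region bounded by such sheets, and relative to a distinguished basis of vanishing cycles $\delta_1,\dots,\delta_{\mu}$ --- whose intersection matrix is a Cartan matrix of the corresponding $A_k$, $D_k$ or $E_k$ type --- the class $\nabla(C)$ decomposes into an integral combination $\sum_j\varepsilon_j\delta_j$ whose coefficients are prescribed, sheet by sheet, by the Davydova index of the sheet and the side of the sheet adjacent to $C$, together with G{\aa}rding's rule at the cuspidal edges. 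The claim to prove is then: for simple $f$ this combination vanishes in the Milnor lattice if and only if every coefficient contributed by a boundary stratum of $C$ that is \emph{visible} in the original picture already vanishes, so that there is no further cancellation and no obstruction hidden in the interior strata. I would establish this with the morsification-enumeration program mentioned in the Introduction, treating the two infinite series $A_k$ and $D_k$ by a uniform argument and the exceptional classes $E_6$, $E_7$, $E_8$ individually, in each case listing the real morsifications, the chambers and the resulting classes $\nabla(C)$ and checking them against conditions~(1)--(2); this also re-derives the enumeration of local lacunas of \cite{Vassiliev86}.

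The implication $\nabla(C)=0\Rightarrow$ (1)--(2) would be the easy half: if the Davydova condition fails at a smooth point of $\partial C$, or if a cuspidal edge of $W^*(F)$ along which $C$ is the ``bigger'' component lies in $\partial C$, then by the results of \cite{Dav} and \cite{Gording77} respectively, $E(F)$ already has diffusion at points of that stratum arbitrarily close to $X$, hence cannot be analytically extended past $X$. I expect the converse --- the ``if'' direction, that (1)--(2) \emph{imply} sharpness --- to be the real obstacle, since it requires excluding \emph{hidden} diffusion. As the final Remark of Section~\ref{go} shows already for cuspidal edges, a component can be sharp at every smooth point of its boundary and yet fail to be sharp because regularity of $E(F)$ breaks down only on the deepest stratum; condition~(2) is exactly the correction this case demands, and one must prove that for simple singularities no correction beyond~(1)--(2) is ever needed --- precisely the property that will be seen to fail for the parabolic classes $P_8$, $X_9$, $J_{10}$ in the rest of the paper. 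Concretely this amounts to showing that once all visible coefficients $\varepsilon_j$ of $\nabla(C)$ vanish, the remaining (``invisible'') vanishing cycles cannot combine to a nonzero class --- equivalently, that the sublattice of the Milnor lattice they span meets the relevant Petrovskii coset only in $0$ --- which is exactly what the case analysis over the simple classes delivers.
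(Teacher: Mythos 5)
Your reduction to the local Petrovskii condition, the identification of the chambers with the complement of a versal discriminant, and the ``only if'' half of the argument (Davydova at smooth points, G{\aa}rding at $A_2$ edges, transported via the functoriality of the Petrovskii class under adjacency) all coincide with what the paper relies on. Where you diverge is precisely in the direction that the paper itself flags as the hard one. The paper states Theorem~\ref{v92} as a result of \cite{Vassiliev92} and, in the Remark following it, indicates that the ``if'' part rests on the \emph{properness of the Lyashko--Looijenga map} for miniversal deformations of simple singularities (after \cite{Loo,Loor}). That is a structural, uniform argument: properness controls how configurations of critical values can move and lets one show that once the visible boundary strata impose no obstruction, the Petrovskii class of the chamber must vanish. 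You instead propose to settle the ``if'' direction by exhaustive enumeration --- compute $\nabla(C)$ chamber by chamber over all simple classes with the morsification-counting program and check conditions~(1)--(2) case by case. That is essentially the route of \cite{Vassiliev86}, which predates \cite{Vassiliev92}; the point of the theorem you are proving is that \cite{Vassiliev92} replaced the enumeration by a geometric mechanism.

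The genuine gap in your proposal is in handling the infinite families. A program enumerating real morsifications can verify the claim for any fixed $A_k$ or $D_k$, but not for all $k$ at once, and you only gesture at a ``uniform argument'' for the two series without saying what it is. That missing uniform argument is exactly what the Lyashko--Looijenga properness supplies, so as written your plan either collapses back onto the paper's method for the series or leaves the bulk of the theorem unproved. Two smaller inaccuracies: the intersection matrix of a distinguished basis of vanishing cycles is not literally the Cartan matrix of the corresponding type (signs and the parity of $n$ intervene); and the coefficients $\varepsilon_j$ of $\nabla(C)$ with respect to a distinguished basis are not individually dictated by the visible boundary sheets --- only certain intersection numbers and inertia indices are, which is what makes the ``no hidden cancellation'' claim nontrivial and why the properness argument (or a finite computation, where legitimate) is actually needed rather than a direct reading-off.
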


\begin{Example}[see~\cite{Gording77}] Close to a swallowtail (see Fig.~\ref{swallowtail}), only the following components of the complement of the wavefront are local lacunas: component~2 if $N$ is odd and the positive inertia index $i_+$ of the quadratic part of the generating function is even; component~3 if $i_+$ is odd and $N$ is arbitrary.
\end{Example}

\begin{Remark}\quad
\begin{enumerate}\itemsep=0pt
\item If $N$ is even, then condition 2) in this theorem is unnecessary: if it is not satisfied at an edge point, then condition 1) also fails at some neighboring smooth points of the wavefront.
\item The restrictions from Sections~\ref{db} and~\ref{go} prove the part ``only if'' of this theorem (which holds also for not necessarily projective versal wavefronts); a proof of the part ``if''
is based on the properness of the so-called Lyashko--Looijenga maps of miniversal deformations of simple singularities, see \cite{Loo,Loor}.
\end{enumerate}
\end{Remark}

A natural question arising from this theorem (and formulated explicitly by V.P.~Pa\-la\-modov about 1991, see~\cite{Vassiliev92}) asks whether it can be extended to all singularity classes of wavefronts of strictly hyperbolic operators. We show below that the answer is negative in the case of odd~$N$ for {\em all} non-simple singularity classes.

\subsection{Generating families of wavefronts and versality}\label{prove}

The projectivized wavefront $W^*(F) \subset {\mathbb R}P^{N-1}$ can be considered close to any its point $X$ as the real discriminant variety of a deformation of the corresponding generating function~(\ref{genfun}). Namely, this deformation depends on the $(N-1)$-dimensional parameter ${\bf x} =(x_0, x_1, \dots, x_{N-2})$ and consists of functions $f_{\bf x}$ defined by formula
\begin{gather}
f_{\bf x} \equiv f(\xi_1, \dots, \xi_{N-2}) - x_0 - \sum_{j=1}^{N-2} x_j \xi_j . \label{prover}
\end{gather}
The parameters $x_0, \dots, x_{N-2}$ of this deformation form a coordinate system in an affine chart of space ${\mathbb R}P^{N-1}$ with the origin at the point $X$: this chart consists of hyperplanes in $\check {\mathbb R}P^{N-1}$ defined by equation
\begin{gather}\label{exex}
\xi_0 =x_0+ \sum_{j=1}^{N-2} x_j \xi_j
\end{gather}
in local coordinates $\xi_i$. Such a point ${\bf x}\in {\mathbb R}P^{N-1}$ belongs to the {\em real discriminant} of deformation~(\ref{prover}) (i.e., $f_{\bf x}$ has a real critical point close to $0$ with critical value $0$) if and only if the hyperplane in $\check {\mathbb R} P^{N-1}$ given by~(\ref{exex}) is tangent to~$A^*(F)$; so the real discriminant of (\ref{prover}) is exactly the variety projective dual to~$A^*(F)$.

We will consider the case when this deformation is sufficiently representative, namely is a~{\em versal deformation} of~$f$. Let us remind this notion (for an expanded description see~\cite{AVG82}).

Suppose we have a smooth function $\varphi(y)\colon {\mathbb R}^n \to {\mathbb R}$ with $d\varphi(0)=0$, and its deformation $\Phi(y, \lambda) \colon \big({\mathbb R}^n \times {\mathbb R}^l, 0\big) \to ({\mathbb R},0)$, i.e., a family of functions $\varphi_\lambda\colon {\mathbb R}^n \to {\mathbb R}$ depending on the parameter $\lambda \in {\mathbb R}^l$, $\varphi_0 \equiv \varphi$. This deformation is called {\em versal} if any other deformation $\Psi\colon \big({\mathbb R}^n \times {\mathbb R}^k ,0\big) \to ({\mathbb R},0) $ of the same function $\varphi$ can be reduced to it by an appropriate map of parameters and a family of local diffeomorphisms of the space ${\mathbb R}^n$ depending on these parameters: in formulas, there should be a smooth map $\theta\colon \big({\mathbb R}^k,0\big) \to \big({\mathbb R}^l,0\big)$ and a family of local (i.e., defined in a neighbourhood of the origin) diffeomorphisms $H_{\varkappa}\colon {\mathbb R}^n \to {\mathbb R}^n$ smoothly depending on the parameter $\varkappa \in {\mathbb R}^k$, $H_0 \equiv \mbox{\{the identity map\}}$, such that
\begin{gather}\label{versdef}
\Psi(y,\varkappa) = \Phi(H_{\varkappa}(y),\theta(\varkappa)) \ \mbox{ for any $y$ and $\varkappa$ sufficiently close to the origin} .
\end{gather}

It is known that
\begin{itemize}\itemsep=0pt
\item For any function singularity $\varphi$ with finite Milnor number $\mu(\varphi)$, almost all its deformations depending on $\geq \mu(\varphi)$ parame\-ters are versal;
\item such a singularity does not have versal deformations depending on less than $\mu(\varphi)$ parame\-ters (versal deformations depending on exactly $\mu(\varphi)$ parameters are called {\em miniversal}),
\item if $\Phi$ is a versal deformation of $\varphi$ then for any sufficiently small perturbation $\tilde \varphi$ of our singularity $\varphi$ there is a point $\lambda \in {\mathbb R}^l$ close to the origin such that the functions $\tilde \varphi$ and $\varphi_\lambda$ can be transformed one into the other by a local diffeomorphism in~${\mathbb R}^n$;
\item if $\Phi$ is a miniversal deformation of $\varphi$ (thus depending on $\mu(\varphi)$ parameters), and $\Psi$ an arbitrary versal deformation of~$\varphi$ depending on~$k$ parameters, then the corresponding map~$\theta$ in~(\ref{versdef}) is a~submersion close to the origin in ${\mathbb R}^k$, sending the discriminant of deformation~$\Psi$ to the discriminant of deformation $\Phi$.
\end{itemize}

Similar notion and facts hold for singularities of holomorphic functions of complex variables.

\begin{Definition}Deformation (\ref{prover}) is called a (projective) generating family of the wave\-front~$W^*(F)$ at point~$X$. The hypersurface $A(F)$ is {\em projective versal} at its point $X$ if the corresponding deformation~(\ref{prover}) is versal, see~\cite{AVG82}.
\end{Definition}

\begin{Example} Suppose that generating function $f$ has a singularity of type~$A_k$ at a point $\Xi \in \check{\mathbb R} P^{N-1}$. Then there is a smooth parametric curve $u\colon \big({\mathbb R}^1,0\big) \to \big({\mathbb R}^{N-2},0\big)$ such that the function $|{\rm grad}\, f|$ grows as the $k$th degree of the parameter along it. The linear hull of the first~$j$ derivatives of this curve at~$0$ is uniquely defined by this condition for any $j \leq k-1$. The hypersurface with equation $\xi_0 = f(\xi_1, \dots, \xi_{N-2})$ is projective versal at point~$\Xi$ if and only if all these first $k-1$ derivatives are linearly independent in~${\mathbb R}^{N-2}$.
\end{Example}

The projective versality at simple singular points is a condition of general position, however for more complicated singularity classes, which split into continuous families of orbits of the group $\operatorname{Diff}_0$ of local changes of variables $\big({\mathbb R}^n,0\big) \to \big({\mathbb R}^n,0\big)$, this is generally not true. For instance, a generic hyperbolic operator of order $\geq 3$ in~${\mathbb R}^8$ can have singularities of type~$P_8$ (see~\cite{AVG82} and the next Section~\ref{parara}) in some discrete set of points of the projectivized wavefront; the generating family~(\ref{prover}) is in this case transversal to the entire stratum~$\{P_8\}$ in the space of function singularities, but not to its particular orbits, and hence is not versal.

\subsection{Parabolic singularities and main result}\label{parara}

Parabolic singularities of function germs (see \cite{AVG82}) form the next natural family of singularity classes after the simple ones. Besides some nice algebraic properties distinguishing them, it is important for us that they are {\em confining} for the family of simple singularities (see \cite{AVG82}), that is, any non-simple singularity can be turned to a parabolic one by an arbitrarily small perturbation. Therefore, proving that some property (e.g., diffusion) holds close to any non-simple singularity, it is enough to prove it for these classes only.

The parabolic classes are listed in Table \ref{t4}. Any function germ ${\mathbb R}^n \to {\mathbb R}$ of this class can be reduced by a smooth change of variables to one of the normal forms indicated in this table. The letter $Q$ in it denotes non-degenerate quadratic forms in the variables missing in the first parts of these formulas, e.g.,~$\pm \xi_4^2 \pm \dots \pm \xi_n^2$ for singularities $P_8^1$ and $P_8^2$, and $\pm \xi_3^2 \pm \dots \pm \xi_n^2$ for five other singularity types. So, the {\em corank} of a parabolic singularity (i.e., the number of essential variables) is equal to 3 in the case of $P_8$ singularities, and to 2 for $X_9$ and $J_{10}$.

\begin{table}[t]\centering
\caption{Real parabolic singularities}\label{t4}\vspace{1mm}
\begin{tabular}{|c|l|l|}
\hline
Notation & \qquad \qquad Normal form & Restrictions \\
\hline
$P_8^1$ &
$\xi_1^3 + \alpha \xi_1 \xi_2 \xi_3 + \xi_2^2\xi_3 + \xi_2\xi_3^2+ Q$ &
$\alpha > -3 $\tsep{2pt} \\ $P_8^2$ &
$\xi_1^3 + \alpha \xi_1 \xi_2 \xi_3 + \xi_2^2\xi_3 + \xi_2\xi_3^2+ Q$ &
$\alpha < -3 $ \tsep{2pt} \\
$\pm X_9$ & $\pm \big(\xi_1^4 + \alpha \xi_1^2 \xi_2^2 + \xi_2^4 + Q\big)$ & $\alpha > -2$ \tsep{2pt} \\
$X_9^1$ & $ \xi_1\xi_2\big(\xi_1^2 + \alpha \xi_1\xi_2 + \xi_2^2\big) + Q$ &
$\alpha^2 < 4 $ \tsep{2pt} \\
$X_9^2$ & $ \xi_1\xi_2(\xi_1 + \xi_2)(\xi_1 + \alpha \xi_2) + Q$ &
$\alpha \in (0,1) $ \tsep{2pt} \\
$J_{10}^3$ & $\xi_1\big(\xi_1 - \xi_2^2\big)\big(\xi_1 - \alpha \xi_2^2\big) + Q$ &
$\alpha \in (0,1) $ \tsep{2pt} \\
$J_{10}^1$ & $\xi_1\big(\xi_1^2 + \alpha \xi_1\xi_2^2 + \xi_2^4\big) + Q$ &
$\alpha^2 < 4 $ \bsep{2pt}\\
\hline
\end{tabular} \end{table}

\begin{Proposition}[see \cite{AVG82}] \label{prim}
All sufficiently small perturbations of a function germ with a~para\-bo\-lic critical point have only simple critical points or parabolic critical points of the same class as the perturbed one.
\end{Proposition}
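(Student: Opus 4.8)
The plan is to reduce the statement, by three soft semicontinuity principles, to a finite check against the classification of singularities of small modality, and then to handle the modulus by a continuity argument.

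Let $f$ be one of the parabolic germs of Table~\ref{t4}, let $\tilde f$ be a small perturbation of it, and fix a critical point $p$ of $\tilde f$ close to the origin. First, by conservation of multiplicity the Milnor numbers of the critical points of $\tilde f$ near $0$ sum to $\mu(f)$, so $\mu(\tilde f,p)\le\mu(f)$, which is $8$, $9$ or $10$ for $P_8$, $X_9$, $J_{10}$ respectively. Second, the rank of the Hessian cannot drop under perturbation, so $\operatorname{corank}(\tilde f,p)\le\operatorname{corank}(f)$, which is $3$ for $P_8$ and $2$ for $X_9$ and $J_{10}$. Third, by the upper semicontinuity of the modality (see \cite{AVG82}), the $\operatorname{Diff}_0$-class of $\tilde f$ at $p$ has modality $\le 1$, hence is either simple or unimodal. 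It therefore suffices to check that every unimodal $\operatorname{Diff}_0$-class of corank $\le\operatorname{corank}(f)$ and Milnor number $\le\mu(f)$ either does not exist or coincides, up to its modulus, with the class of $f$.

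Now I would run this check against the list of unimodal classes in \cite{AVG82}: the parabolic classes $P_8$, $X_9$, $J_{10}$, the hyperbolic classes $T_{p,q,r}$ with $1/p+1/q+1/r<1$, and the fourteen exceptional classes $E_{12},\dots,U_{12}$. For $P_8$ one uses that there is no simple singularity of corank $3$ and that the unique corank-$3$ class with $\mu\le 8$ is $P_8$ itself (the case of a nondegenerate cubic $3$-jet, i.e.\ $T_{3,3,3}$); every corank-$\le 2$ critical point of $\tilde f$ then has $\mu\le 8$ and lies among $A_k$ ($k\le 7$), $D_k$ ($k\le 8$), $E_6,E_7,E_8$, all simple. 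For $X_9$ the only corank-$\le 2$ unimodal class with $\mu\le 9$ is $X_9$, the others of $\mu\le 9$ being $A_k$, $D_k$ ($k\le 9$), $E_6,E_7,E_8$, all simple. For $J_{10}$ the corank-$\le 2$ unimodal classes with $\mu\le 10$ are $X_9$ ($\mu=9$), $J_{10}$ ($\mu=10$) and the hyperbolic $T_{2,4,5}$ ($\mu=10$), so here one needs in addition that neither $X_9$ nor $T_{2,4,5}$ occurs as the class of a critical point of a small perturbation of $J_{10}$. I expect this last point to be the only genuinely nontrivial input: it is read off from the adjacency diagram of unimodal singularities in \cite{AVG82}, and reflects, on one side, that $J_{10}$ and $T_{2,4,5}$ have equal Milnor number and distinct classes, so neither lies in the closure of the other, and on the other side that the $\mu$-constant stratum of $X_9$, quasihomogeneous of type $(1/4,1/4)$, is not contained in the closure of that of $J_{10}$, quasihomogeneous of type $(1/3,1/6)$ — a fact one verifies by comparing the dimensions of these strata (equivalently, the numbers of deformation directions of nonpositive weight), confirming that no deformation of $J_{10}$ reaches $X_9$.

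It remains to see that a critical point of $\tilde f$ lying in the parabolic class of $f$ lies in the \emph{same} sub-class of Table~\ref{t4}. Indeed, the modulus $\alpha$ is a continuous $\operatorname{Diff}_0$-invariant function on the $\mu$-constant stratum of each parabolic class (a cross-ratio of the four roots of the binary quartic for $X_9$ and $J_{10}$, a cross-ratio attached to the cubic for $P_8$), and the restrictions in the last column of Table~\ref{t4} — $\alpha>-3$, $\alpha<-3$, $\alpha^2<4$, $\alpha\in(0,1)$, and the like — single out \emph{open} sub-intervals. Hence for $\tilde f$ close enough to $f$ the modulus at such a critical point stays in the same open interval as that of $f$, i.e.\ in the same sub-class. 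This completes the argument; as noted, the one step that is not a soft semicontinuity or continuity principle is the verification of the relevant non-adjacencies for $J_{10}$, which is where the explicit classification of \cite{AVG82} must be invoked.
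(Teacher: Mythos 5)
Your argument is correct in substance but takes a genuinely different route from the paper's, which is a single sentence: a small perturbation cannot increase the Milnor number, the corank, or the \emph{multiplicity} (order of vanishing) of a critical point. You replace multiplicity by modality. With $\mu$, corank and multiplicity the classification closes at once: $X_9$ and $T_{2,4,5}$ both have multiplicity $4$, while $J_{10}$ has multiplicity $3$, so they are excluded as degenerations of $J_{10}$ without any appeal to adjacency diagrams; likewise for $P_8$ (multiplicity $3$, corank $3$) the only non-simple candidate is $P_8$ itself. With modality in place of multiplicity, the exclusion of $X_9$ and $T_{2,4,5}$ from perturbations of $J_{10}$ is no longer automatic, and this is exactly where your write-up is soft: the proposed verification ``by comparing the dimensions of the $\mu$-constant strata'' does not by itself decide an adjacency question, and the appeal to the equality $\mu(T_{2,4,5})=\mu(J_{10})$ needs a further principle (constancy of topological type in $\mu$-constant families) to yield non-adjacency. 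Both points are settled cleanly by the multiplicity semicontinuity the paper uses, and I would recommend replacing the adjacency discussion by that one observation. On the other hand, your closing paragraph --- that the modulus $\alpha$ is a continuous $\operatorname{Diff}_0$-invariant on the parabolic $\mu$-constant stratum and that the sub-classes of Table~\ref{t4} are cut out by open conditions on $\alpha$, so a small perturbation cannot jump between, say, $P_8^1$ and $P_8^2$ --- is a genuine addition: the paper's terse proof pins down the parabolic \emph{type} but leaves the ``same class'' (same normal form and open range of $\alpha$) assertion implicit, and your continuity argument is exactly what fills that in.
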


Indeed, a small perturbation cannot increase the Milnor number, or the corank or the multiplicity of a singularity.

\begin{Theorem} \label{maint} Suppose that $N$ is odd, the projectivized wavefront $W^*(F)\subset {\mathbb R}P^{N-1}$ of a~strict\-ly hyperbolic operator $F$ has a singularity of one of classes indicated in Table~{\rm \ref{t4}} at a point $X \in W^*(F)$, and one of two conditions is satisfied:
\begin{enumerate}\itemsep=0pt
\item[$a)$] the positive inertia index of the quadratic part $Q$ of the corresponding generating function is odd;
\item[$b)$] the singularity type of $W^*(F)$ at point $X$ is one of the following four: $X_9^1$, $X_9^2$, $J_{10}^3$, or~$J_{10}^1$.
\end{enumerate}
Then
\begin{enumerate}\itemsep=0pt
\item[$1)$] the principal fundamental solution $E(F)$ has a holomorphic diffusion in {\em all} local components of the complement of the wavefront $W^*(F)$ at the point~$X$;
\item[$2)$] if the singularity of the wavefront at point $X$ is projective versal, then
there is a local $($close to the point~$X)$ connected component of the complement of the wavefront, such that the fundamental solution of~$F$ is sharp in this component at all points of the wavefront in the boundary of this component, except for the points of the most singular stratum $($i.e., for parabolic singular points of the corresponding class$)$.
\end{enumerate}
\end{Theorem}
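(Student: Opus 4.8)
The plan is to recast both assertions through the local Atiyah--Bott--G{\aa}rding sharpness criterion \cite{ABG73} and then to reduce them to a finite verification carried out by the morsification counting program. Write $n=N-2$ for the number of variables of the generating function $f$ of $W^*(F)$ at $X$, and let $V$ be the complex local Milnor fibre of $f$, so that $H_{n-1}(V;\mathbb Z_2)\cong\mathbb Z_2^{\mu}$ with $\mu\in\{8,9,10\}$ the Milnor number of the corresponding parabolic class. To a local connected component $C$ of $\mathbb R P^{N-1}\setminus W^*(F)$ at $X$ one attaches its \emph{local Petrovskii class} $L(C)\in H_{n-1}(V;\mathbb Z_2)$, carried by the real zero-level set $\{f_{\mathbf x}=0\}$ of the generating function for $\mathbf x\in C$ near $X$; the criterion says that $E(F)$ is holomorphically sharp in $C$ at $X$ if and only if $L(C)=0$ (for $C^\infty$-sharpness one must in addition kill the companion classes in lower-dimensional homology, which the program handles in the same pass). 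In these terms assertion~1) reads ``$L(C)\neq0$ for every~$C$'', while assertion~2) asks for one component $C_0$ with $L(C_0)\neq0$ whose smooth boundary carries no Davydova obstruction and none of whose cuspidal edges is the ``bigger'' one.

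A standard reduction disposes of the versality hypothesis in assertion~1): the generating family~(\ref{prover}) is induced from a miniversal deformation of the parabolic germ by a map $\theta$ of parameter spaces, so every local component $C$ at $X$ is carried by $\theta$ into a single chamber $D$ of the complement of the miniversal discriminant, with $f_{\mathbf x}$ right-equivalent to the corresponding miniversal member; the resulting diffeomorphism of level sets (real, then complexified) identifies $L(C)$ with $L(D)$. Thus assertion~1) follows as soon as $L(D)\neq0$ is verified for every chamber $D$ of a miniversal deformation of each class of Table~\ref{t4} allowed by hypothesis~(a) or~(b). Under the versality hypothesis of assertion~2) the map $\theta$ is a submersion, so those chambers $D$ are precisely the local components of $\mathbb R P^{N-1}\setminus W^*(F)$ at $X$, and the Davydova and cuspidal data of their walls coincide with those of $W^*(F)$.

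The computational core runs as follows. For each relevant class the program enumerates all topological types of real morsifications of $f$, the induced local stratification at $X$ and the adjacencies of its chambers; a chamber is recorded by the sign vector of the critical values of its morsified generating function together with the Morse indices of the critical points (which, by the product structure $f=g+Q$, are those of the morsified essential part $g$ translated by the fixed amount determined by $Q$). From this the program writes $L(C)$ as an explicit $\mathbb Z_2$-combination of vanishing cycles --- the combination depending on the signs, the Morse indices and the parity of $n$, so that $N$ (hence $n$) being odd is essential --- and evaluates it against the intersection form carried by the affine Dynkin diagram ($\widetilde E_6$, $\widetilde E_7$, or $\widetilde E_8$) of the class. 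For assertion~1) one checks $L(C)\neq0$ for every chamber: under~(a) the odd value of the positive inertia index of $Q$ shifts all parities so that in every chamber the real zero-level set represents a non-zero class, while under~(b) the factorised normal forms $X_9^1$, $X_9^2$, $J_{10}^3$, $J_{10}^1$ each produce, in every realisable morsification, a real oval whose image in $V$ is a non-zero combination of vanishing cycles; both verifications are finite and done class by class. For assertion~2) one extracts from the same enumeration one chamber $C_0$ with $L(C_0)\neq0$ all of whose smooth walls lie on the Davydova-good side and none of whose cuspidal edges is approached from the ``bigger'' side; projective versality being an open condition, it holds at every point of $\partial C_0$ near $X$, and by Proposition~\ref{prim} each such point other than $X$ carries a simple or non-singular germ, so Theorem~\ref{v92} applies there and gives sharpness of $E(F)$ in $C_0$ at every point of $\partial C_0$ except the parabolic stratum, whereas $L(C_0)\neq0$ gives diffusion at that stratum.

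The hard part is the production and verification of the chamber $C_0$ of assertion~2): one must realise, within a single morsification, simultaneously the sign pattern keeping $L(C_0)$ non-zero and the sign pattern making every smooth wall Davydova-good and every adjacent cuspidal edge ``smaller'' --- a compatibility of parity conditions around $C_0$ that is genuinely delicate --- and, the Lyashko--Looijenga map of a parabolic singularity failing to be proper, there is no soft substitute for the explicit output of the program together with the orientation bookkeeping of vanishing cycles through the non-unimodular Milnor lattices of $\widetilde E_6$, $\widetilde E_7$, and $\widetilde E_8$. It is exactly hypotheses~(a) and~(b) that make such a $C_0$ exist and that, for assertion~1), force every $L(C)$ to be non-zero.
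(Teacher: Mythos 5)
Your overall plan is pointed in the right direction --- Petrovskii classes, functoriality under adjacency, a verification program enumerating morsifications, the reduction of the boundary singular points to the Morse and $A_2$ cases through Theorem~\ref{v92} --- and for assertion~2) it is essentially the paper's strategy. But there are two concrete problems that are not implementation details.

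First, you nowhere use the reduction of Case~(b) to Case~(a). The paper handles Case~(b) in one stroke: the classes $X_9^1$, $X_9^2$, $J_{10}^3$, $J_{10}^1$ are invariant under $f\mapsto -f$, and when $N$ is odd this sign change flips the parity of the positive inertia index of the rank-$(N-4)$ form $Q$ while preserving sharpness; so (b) is literally a restatement of (a) for these four types. Your replacement --- that in (b) the factorised normal forms ``each produce, in every realisable morsification, a real oval whose image in $V$ is a non-zero combination of vanishing cycles'' --- is not a proof and is not even pointing at the right object: the Petrovskii class lives in $\tilde H_{n-1}(V,\partial V)$, and the content of the paper's Proposition~\ref{lem11} is precisely that, because the Milnor lattice of a parabolic germ is degenerate, the class is \emph{not} in the image of $H_{n-1}(V)\to H_{n-1}(V,\partial V)$, i.e.\ \emph{not} a combination of vanishing cycles. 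Without the $-f$ trick, Case~(b) with $Q$ of even positive index is left unverified.

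Second, your description of the Petrovskii class is wrong in a way that matters. You put $L(C)$ in $H_{n-1}(V;\mathbb Z_2)$ ``carried by the real zero-level set''. This describes the \emph{even} Petrovskii class; but here $n=N-2$ is odd, so the relevant obstruction is the \emph{odd} Petrovskii class (obtained from the even class of $f+x_{n+1}^2-x_{n+2}^2$ by suspension, as the paper recalls), it lives in the \emph{relative} group $\tilde H_{n-1}(V,\partial V;\mathbb Z)$, and the integer orientations are indispensable because the final linear-algebra check --- is $\Pi(\lambda)$ in the $\mathbb Z$-row-span of the intersection matrix --- is over $\mathbb Z$, not $\mathbb Z_2$. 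With $\mathbb Z_2$ coefficients the degeneracy of the parabolic intersection form that makes assertion~1) hold could be lost. The paper's proof of assertion~1) is in fact more economical than your chamber-by-chamber check: one shows the boundary $\partial L\in H_{n-2}(\partial V)$ is a fixed non-zero element, independent of the chamber, because crossing a wall only adds a row of the intersection matrix to $\Pi(\lambda)$ and hence does not change $\partial L$; this also dispenses with the versality hypothesis for assertion~1) without any induced-deformation detour.
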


Both statements of this theorem in Case~(b) follow from the same statements of Case~(a) for the same singularity types, because the four classes mentioned there are invariant under the multiplication of functions by~$-1$, which preserves also the condition of sharpness, but changes (in the case of odd~$N$) the parities of inertia indices of quadratic forms of rank~$N-4$. Therefore we will consider in our proofs only Case~(a), i.e., assume that positive inertia indicex of quadratic form~$Q$ is odd. For a proof of Statement~1 of this theorem see~\cite{APLT} for all parabolic singularity classes except for~$P_8^2$, and~\cite{V16} for~$P_8^2$; see also Remark~\ref{rem3} in p.~\pageref{rem3} below.
Statement~2 in Case~(a) will be proved in Section~\ref{p8} for singularities~$P_8^1$ and~$P_8^2$, and in Section~\ref{cor2} for all the other parabolic singularity types.

\begin{Remark} The singular points of wavefronts of three parabolic classes not mentioned in Case~(b) of this theorem in the case of odd~$N$ and even positive inertia index of form~$Q$ do have local lacunas in their neighborhoods, see~\cite{V16}.
\end{Remark}

\section{Topological reformulations}

\subsection{Sharpness and the local Petrovskii condition}

Holomorphic sharpness of wavefronts can be detected by a topological criterion, the {\em local Petrovskii condition}, introduced in~\cite{ABG73} and reformulated in~\cite{Vassiliev86} in terms of the topology of Milnor fibers of generating functions. This condition generalizes a global homological condition from~\cite{Petrovskii45}.

This condition has sense for all real holomorphic function singularities with finite {\em Milnor numbers}. Let us remind the needed topological objects (see~\cite{AVG82,Milnor}). Let $f\colon \big({\mathbb C}^n,{\mathbb R}^n,0\big) \to ({\mathbb C},{\mathbb R},0)$ be a holomorphic function germ with $df(0)=0$; suppose that $0$ is an isolated critical point of~$f$, so that its Milnor number~$\mu(f)$ is finite. Let $B \subset {\mathbb C}^n$ be a sufficiently small ball centered at point~$0$, and $f_\lambda$ a very small perturbation of function $f$ which is non-discriminant (i.e., the variety $f_\lambda^{-1}(0)$ is non-singular in $B$). The manifold $V_\lambda \equiv f^{-1}_\lambda(0) \cap B$ is then called the {\em Milnor fiber} corresponding to the perturbation $f_\lambda$. It is a smooth $(2n-2)$-dimensional manifold with boundary $\partial V_\lambda \equiv V_\lambda \cap \partial B$, and is homotopy equivalent to the wedge of $\mu(f)$ spheres of dimension $n-1$. In particular $\tilde H_{n-1}(V_\lambda) \simeq {\mathbb Z}^{\mu(f)}$ and
 \begin{gather}\tilde H_{n-1}(V_\lambda, \partial V_\lambda) \simeq {\mathbb Z}^{\mu(f)}; \label{miln}
\end{gather}
here $\tilde H$ denotes the reduced homology groups, so that $\tilde H_{n-1} \equiv H_{n-1}$ if $n>1$.

If perturbation $f_\lambda$ is {\em real} (that is, $f_\lambda({\mathbb R}^n) \subset {\mathbb R}$), then there are two important elements of the group~(\ref{miln}): the {\em even} and the {\em odd local Petrovskii classes}.

The {\it even Petrovskii class} is realized by the fundamental class of the manifold $V_\lambda \cap {\mathbb R}^n$ of real points of the Milnor fiber, oriented as the boundary of the domain where $f_\lambda\leq 0$. For example, if the function $f$ has a local minimum at its critical point, then the even Petrovskii class of its perturbation $f+ \varepsilon$, $\varepsilon>0$, is trivial: it is represented by the empty cycle.

 In this work we will deal with the {\it odd Petrovskii class} whose construction is more tricky, see, e.g.,~\cite[Chapter~IV]{APLT}. However, we will be mainly interested not in this class itself but in the condition of its triviality, which has an easy characterization: the odd Petrovskii class associated with the perturbation $f_\lambda(x_1, \dots, x_n)$ of function $f$ is equal to zero if and only if the even Petrovskii class of perturbation $f_\lambda(x_1, \dots, x_n)+ x_{n+1}^2-x_{n+2}^2$ of function $f(x_1, \dots, x_n)+ x_{n+1}^2-x_{n+2}^2$ is equal to zero.

\begin{Definition} \rm
The {\it local Petrovskii class} in the group (\ref{miln}) is defined as the even local Petrovskii class if $n$ is even or the odd local Petrovskii class if $n$ is odd. A non-discriminant point $\lambda$ satisfies the {\it local Petrovskii condition} if the corresponding local Petrovskii class is equal to $0$.
\end{Definition}

\begin{Proposition}\label{sence}If $f$ is the generating function of the projectivized wavefront~$W^*(F)$ of a~strictly hyperbolic operator at its point~$X$, the corresponding critical point of~$f$ is isolated, and~$f_{\bf x}$ is a small non-discriminant real perturbation of~$f$ of the form~\eqref{prover}, then the principal fundamental solution~$E(F)$ is holomorphically sharp at~$X$ in the local $($close to $X)$ component of ${\mathbb R}P^{N-1}\setminus W^*(F)$ containing point~${\bf x}$ if and only if the corresponding local Petrovskii class is equal to~$0$ in group $\tilde H_{N-3}(V_{\bf x}, \partial V_{\bf x})$.
\end{Proposition}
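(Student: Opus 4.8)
The plan is to derive this proposition as the localization, in the language of generating families, of the local Petrovskii condition of Atiyah--Bott--G{\aa}rding: I would combine the criterion of \cite{ABG73}, that holomorphic sharpness of $E(F)$ at a point of the wavefront is equivalent to the local Petrovskii condition, with the homological reformulation of \cite{Vassiliev86}, that the local Petrovskii condition for a deformation amounts to the vanishing of an explicit class in the relative homology of the Milnor fiber, and apply both to the deformation~(\ref{prover}), which by hypothesis is the generating family of $W^*(F)$ at~$X$.

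First I would recall the Herglotz--Petrovskii--Leray representation of $E(F)$ (see~\cite{Leray,ABG73}): in the affine chart~(\ref{exex}) around $X$, and modulo a summand holomorphic across the complexified wavefront, $E(F)({\bf x})$ equals the integral of an explicit rational $(N-2)$-form over a cycle lying, near the critical point $0 \in {\mathbb C}^{N-2}$, in the complement of the complexified level set $\{f_{\bf x}=0\}$; passing to the boundary of a Milnor ball, this cycle is naturally a relative cycle of the pair $(V_{\bf x}, \partial V_{\bf x})$. As ${\bf x}$ ranges over one fixed local component $C$ of ${\mathbb R}P^{N-1}\setminus W^*(F)$, the cycle is transported continuously, and $E(F)$ extends holomorphically from $C$ to a neighbourhood of~$X$ in~${\mathbb C}^N$ exactly when this relative cycle can be deformed off the vanishing homology of~$f$, i.e.\ when it bounds in $(V_{\bf x},\partial V_{\bf x})$; the obstruction is precisely its class in $\tilde H_{N-3}(V_{\bf x},\partial V_{\bf x})$. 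That this obstruction class is the even local Petrovskii class when $N-2$ is even and the odd one when $N-2$ is odd --- hence, in either case, the local Petrovskii class of the Definition above --- is exactly what the Atiyah--Bott--G{\aa}rding construction and~\cite{Vassiliev86} establish; the suspension $f\mapsto f+x_{n+1}^2-x_{n+2}^2$ used in the definition of the odd class is the very device that absorbs the parity shift between the cases $N$ even and $N$ odd.

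The step requiring genuine care --- and the one I expect to be the main obstacle --- is the \emph{localization}: one must verify that the global Atiyah--Bott--G{\aa}rding obstruction, a priori read off from the topology of the complement of the complexified wavefront in a full neighbourhood of~$X$ in~${\mathbb C}P^{N-1}$, is faithfully carried by a class in $\tilde H_{N-3}(V_{\bf x},\partial V_{\bf x})$, with nothing lost in passing from the ``large'' ambient complement to the local Milnor-fiber picture, and that the local complementary components indexed by~${\bf x}$ match in the two descriptions. Here one uses the properties of generating families recalled in Section~\ref{prove}: since~(\ref{prover}) is versal, the parameter map from the Atiyah--Bott--G{\aa}rding deformation to~(\ref{prover}) is a submersion carrying discriminant to discriminant, hence induces an isomorphism on the relevant vanishing homology and a bijection of local complementary components of the discriminants; feeding this compatibility through the criterion of~\cite{ABG73} and the reformulation of~\cite{Vassiliev86} then yields the asserted equivalence.
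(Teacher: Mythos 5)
The paper does not give an argument for this proposition: its entire ``proof'' is the sentence ``Part `if' of this statement is proved in \cite{ABG73}, part `only if' in \cite{Vassiliev86}.'' You correctly identified that the proposition rests on the Herglotz--Petrovskii--Leray representation together with the local reformulation from \cite{Vassiliev86}, so at the level of which references are responsible, your proposal is aligned with the paper. However, your reconstruction of how these references combine has two real problems.

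First, you present the equivalence as a single ``exactly when'': ``$E(F)$ extends holomorphically \ldots\ exactly when this relative cycle can be deformed off the vanishing homology.'' This elides the fact that the two directions have very different status. The ``if'' direction (vanishing Petrovskii class $\Rightarrow$ sharpness) is comparatively soft: if the tube cycle bounds in the relevant pair, the Leray integral can be moved off the singular locus and the residue formula gives holomorphic continuation; this is what \cite{ABG73} does. The ``only if'' direction (sharpness $\Rightarrow$ vanishing class) is genuinely hard --- one must rule out accidental cancellation, i.e.\ show that a non-bounding cycle really does carry a non-removable singularity --- and this is the content of \cite{Vassiliev86}, via a detailed monodromy/Picard--Lefschetz analysis. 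Treating this as an automatic ``exactly when'' hides the main substance of the proposition.

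Second, the localization mechanism you propose does not exist as stated. You invoke a ``parameter map from the Atiyah--Bott--G{\aa}rding deformation to~(\ref{prover})'' being a submersion and carrying discriminant to discriminant. But there is no ``ABG deformation'' in the sense required: the Atiyah--Bott--G{\aa}rding machinery works directly with the family of hyperplane sections of the cone $A(F)$ and the topology of the complement of the complexified front, not with a versal deformation of a function germ that could be compared with~(\ref{prover}) by the versality functoriality of Section~\ref{prove}. Versality of~(\ref{prover}) is genuinely irrelevant here: Proposition~\ref{sence} does \emph{not} assume projective versality (unlike Theorem~\ref{v92}), and indeed the paper applies it also in non-versal situations. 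The passage from the global ABG picture to the Milnor-fiber picture is instead accomplished by the identity~(\ref{genfun}) relating $A^*(F)$ near the tangency point to the graph of $f$, by the corresponding identification of the local complement of the complexified wavefront with the complement of the discriminant of~(\ref{prover}), and by the excision/Leray-coboundary analysis in \cite{Vassiliev86}. If you wish to flesh out the citation into an actual proof, the step to get right is precisely this identification, not a versality comparison.

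A smaller point: the ``relative cycle'' you describe in $\tilde H_{N-3}(V_{\bf x},\partial V_{\bf x})$ is not the Petrovskii class directly but is related to the Leray tube over it (one degree and one space away, via the Leray coboundary and Alexander/Poincar\'e--Lefschetz duality); your account glosses over this translation, which is exactly where the even/odd distinction in the definition of the local Petrovskii class enters.
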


Part ``if'' of this statement is proved in \cite{ABG73}, part ``only if'' in \cite{Vassiliev86}.

\subsection{Functoriality of local Petrovskii classes under adjacencies of singularities}

Let $\Phi\colon \big({\mathbb C}^n \times {\mathbb C}^l, {\mathbb R}^n \times {\mathbb R}^l, 0\big) \to ({\mathbb C}, {\mathbb R}, 0)$ be a deformation of our function germ $f$, parameterized by points $\lambda \in {\mathbb C}^l$, i.e., $\Phi$ is a family of functions $f_\lambda \equiv \Phi(\cdot,\lambda) \colon {\mathbb C}^n \to {\mathbb C}$ such that $f_0 \equiv f$ and $f_\lambda\big({\mathbb R}^n\big) \subset {\mathbb R}$ if $\lambda \in {\mathbb R}^l \subset {\mathbb C}^l$.

Let $\Sigma \subset {\mathbb R}^l$ be the {\em discriminant} of this deformation, i.e., the set of parameters $\lambda \in {\mathbb R}^l$ such that variety $V_\lambda$ is singular in~$B$. This set contains the {\em real discriminant} (see Section~\ref{prove}) but generally can be greater than it, including also parameter values $\lambda$ of functions $f_\lambda$ having imaginary critical points in~$B$ with zero critical values. This set divides parameter space ${\mathbb R}^l$ into several connected components in a neighbourhood of the origin.

By the construction of Petrovskii classes, the local Petrovskii condition is satisfied or is not satisfied simultaneously for all points from any such component.

Consider some such component~$C$; let $\tilde \lambda \in \Sigma \cap \bar C$ be a discriminant value of the parameter, which lies in the boundary of~$C$. By definition of the discriminant, the corresponding function~$f_{\tilde \lambda}$ has several (at least one) critical points $a_j\in B$ with critical value~$0$. Consider a set of very small non-intersecting balls $B_j \subset B$ around all these points.

Let $\overline{\lambda} \in C$ be a non-discriminant value of the parameter which is very close to $\tilde \lambda$, so that the corresponding level set $V_{\overline{\lambda}}$ is transversal to the boundaries of all balls~$B_j$, and varieties $V_{\overline{\lambda}} \cap B_j$ can be considered as the Milnor fibers of the corresponding critical points~$a_j$ of function $f_{\tilde \lambda}$.

\begin{Proposition}\label{funct}\quad
\begin{enumerate}\itemsep=0pt
\item[$1.$] For any real critical point $a_j \in B$ of function $f_{\tilde \lambda}$ with $f_{\tilde \lambda}(a_j)=0$, the odd $($respectively, even$)$ Petrovskii class of singularity $(f_{\tilde \lambda},a_j)$ in the homology group $\tilde H_{n-1}(V_{\overline{\lambda}} \cap B_j, V_{\overline{\lambda}} \cap \partial B_j)$ of the corresponding Milnor fiber is equal to the image of the odd $($respectively, even$)$ Petrovskii class of initial singularity $(f,0)$ under the obvious map
\begin{gather}\label{functor}
\tilde H_{n-1}(V_{\overline{\lambda}}, \partial V_{\overline{\lambda}}) \to
\tilde H_{n-1}\big(V_{\overline{\lambda}}, V_{\overline{\lambda}} \cap \overline{(B\setminus B_j)}\big) \equiv
\tilde H_{n-1}(V_{\overline{\lambda}} \cap B_j, V_{\overline{\lambda}} \cap \partial B_j).
\end{gather}
\item[$2.$] For any imaginary critical point $a_j \in B$ of $f_{\tilde \lambda}$ with $f_{\tilde \lambda}(a_j)=0$, the images of the odd and even local Petrovskii classes under the map~\eqref{functor} are equal to~$0$.
\item[$3.$] If deformation $\Phi$ has the form \eqref{prover}, then the corresponding principal fundamental solution~$E(F)$ is holomorphically sharp at point $\tilde \lambda$ of the wavefront in component $C$ if and only if the images of the local Petrovskii class of $f_{\overline{\lambda}}$ under all maps \eqref{functor} corresponding to all such critical points $a_j$ are equal to zero in all groups $\tilde H_{N-3}(V_{\overline{\lambda}} \cap B_j)$.
\end{enumerate}
\end{Proposition}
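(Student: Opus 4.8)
The plan is to prove the three assertions in order, reducing the odd Petrovskii class to the even one and handling the even case by excision. For the \emph{even} Petrovskii class in Statement~1 I would use that it is realized by an honest geometric cycle --- the real point set of the Milnor fiber, oriented as the boundary of $\{f_{\overline\lambda}\le0\}$. For $\overline\lambda$ close enough to $\tilde\lambda$ the restriction $f_{\overline\lambda}|_{B_j}$ is a small non-discriminant perturbation of the germ $(f_{\tilde\lambda},a_j)$, so $V_{\overline\lambda}\cap B_j$ is a legitimate Milnor fiber of that germ and, by definition, $(V_{\overline\lambda}\cap B_j)\cap\mathbb R^n$ with its boundary-of-sublevel orientation represents the even Petrovskii class of $(f_{\tilde\lambda},a_j)$. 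On the other hand, after the excision identification $\tilde H_{n-1}\big(V_{\overline\lambda},V_{\overline\lambda}\cap\overline{(B\setminus B_j)}\big)\cong\tilde H_{n-1}(V_{\overline\lambda}\cap B_j,V_{\overline\lambda}\cap\partial B_j)$ the map~\eqref{functor} sends a relative cycle to its restriction to $B_j$; applied to the defining cycle $V_{\overline\lambda}\cap\mathbb R^n$ of the even Petrovskii class of $(f,0)$ this produces exactly $(V_{\overline\lambda}\cap B_j)\cap\mathbb R^n$, and the two orientations agree because each is determined pointwise by the sign of $f_{\overline\lambda}$. This settles Statement~1 for the even class.

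For the \emph{odd} Petrovskii class I would pass to the stabilization, replacing $f$ by $\hat f:=f(x_1,\dots,x_n)+x_{n+1}^2-x_{n+2}^2$ on $\mathbb C^{n+2}$ and each $f_\lambda$ by $\hat f_\lambda:=f_\lambda+x_{n+1}^2-x_{n+2}^2$. By Thom--Sebastiani the Milnor fiber of $\hat f_\lambda$ is the join of the Milnor fiber of $f_\lambda$ with that of $x_{n+1}^2-x_{n+2}^2$ (Milnor number $1$); this join raises homological degree by $2$, preserves Milnor numbers, and --- by the construction of the odd Petrovskii class recalled in \cite[Chapter~IV]{APLT} --- carries the odd Petrovskii class of $(f,0)$ and of $(f_{\tilde\lambda},a_j)$ to the even Petrovskii class of the respective stabilizations. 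Taking the ball around the critical point $\hat a_j=(a_j,0,0)$ of $\hat f_{\tilde\lambda}$ to be $B_j\times D$ with $D\subset\mathbb C^2$ a small ball, the join description shows that the map~\eqref{functor} for $\hat f$ is identified with the one for $f$ tensored with the identity, so the relevant square commutes and Statement~1 for the odd class follows from the even case applied to $\hat f$. Statement~2 is then immediate: choose $B_j$ small enough to be disjoint from $\mathbb R^n$ (possible since $a_j\notin\mathbb R^n$), so that $(V_{\overline\lambda}\cap B_j)\cap\mathbb R^n=\emptyset$ and the image of the even Petrovskii class of $(f,0)$ under~\eqref{functor} is represented by the empty chain; as $B_j\times D$ is likewise disjoint from $\mathbb R^{n+2}$, the same reasoning after stabilization kills the image of the odd class.

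For Statement~3, the germ of $W^*(F)$ at $\tilde\lambda$ is the union of the locally irreducible branches corresponding to the real critical points $a_j$ of $f_{\tilde\lambda}$ with critical value $0$, the branch of $a_j$ having generating function $(f_{\tilde\lambda},a_j)$ and local Milnor fiber $V_{\overline\lambda}\cap B_j$, with $\Phi$ restricting near $a_j$ to a generating family of it. By the removable-singularity arguments behind Proposition~\ref{sence} (see \cite{ABG73}), $E(F)$ is holomorphically sharp at $\tilde\lambda$ in $C$ if and only if it is sharp across each of these branches; applying Proposition~\ref{sence} to $(f_{\tilde\lambda},a_j)$ shows that this is equivalent to the vanishing of the local Petrovskii class of $(f_{\tilde\lambda},a_j)$ at the parameter $\overline\lambda$; and by Statement~1 that class is the image of the local Petrovskii class of $(f,0)$ under the corresponding map~\eqref{functor}, while by Statement~2 the imaginary critical points impose no further condition. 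This is the asserted criterion.

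I expect the odd case to be the main obstacle: since the odd Petrovskii class is not given by a single geometric cycle, the compatibility of its construction with the maps~\eqref{functor} --- including the matching of orientations and the compatibility of the chosen local balls under the Thom--Sebastiani join --- must be extracted carefully from that picture rather than seen directly. Excision, the standard transversality estimates for the Milnor fibration of $f$ under small perturbations, and the already-available Proposition~\ref{sence} are then routine.
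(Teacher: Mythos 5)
Your proposal is correct and matches the paper's own (very terse) argument: the paper simply asserts that Statements 1 and 2 ``follow immediately from the construction of Petrovskii classes'' and that Statement 3 follows from the multisingularity version of Proposition~\ref{sence} proved in~\cite{Vassiliev86}, and your write-up is exactly a detailed unwinding of this --- the geometric-cycle description of the even class, the stabilization $f\mapsto f+x_{n+1}^2-x_{n+2}^2$ to reduce the odd class to the even one, disjointness from $\mathbb R^n$ for imaginary critical points, and branch-by-branch application of the sharpness criterion for Statement~3. You are also right that the only real verification burden lies in checking that the odd Petrovskii class (and not merely its vanishing condition) is compatible with restriction to $B_j$ under the Thom--Sebastiani identification, a point the paper likewise delegates to the construction in \cite{APLT}.
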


Statements~1 and~2 of this proposition follow immediately from the construction of Petrovskii classes, and Statement~3 from the multisingularity version of Proposition~\ref{sence} also proved in~\cite{Vassiliev86}.

If $f$ and $g$ are two germs of functions $\big({\mathbb C}^n,{\mathbb R}^n,0\big) \to ({\mathbb C}, {\mathbb R}, 0)$ with ${\rm d}f(0)=0={\rm d}g(0)$, which are $\operatorname{Diff}_0$-equivalent (i.e., can be transformed one to another by the composition with a local diffeomorphism $\big({\mathbb C}^n,{\mathbb R}^n,0\big) \to \big({\mathbb C}^n,{\mathbb R}^n,0\big)$), and $\Phi$ and $\Gamma$ are some their real versal deformations, then there is a one-to one correspondence between the local connected components of the complements of the discriminants of these deformations. This correspondence is defined by maps inducing deformations equivalent to~$\Phi$ and~$\Gamma$ one from the other in accordance with the definition of versal deformations (see~\cite{AVG82} and map~$\theta$ in~(\ref{versdef})). The Petrovskii conditions are respected by this correspondence. Therefore to prove Statement~2 of Theorem~\ref{maint} for a parabolic singularity class it is enough to present a component of the complement of the discriminant set of an arbitrary real versal deformation of an arbitrary singularity of this class, such that the local Petrovskii condition will be satisfied in this component at all the discriminant points of its boundary except for the most singular points corresponding to the parabolic singularity class itself. For instance, we can consider only singularities~$f(\xi)$ given by the polynomials from Table~\ref{t4}, and their monomial versal deformations of form
\begin{gather}\label{mondef}
\Phi(\xi,\lambda)\equiv f(\xi)+ \sum_{\alpha \in M} \lambda_\alpha \xi^\alpha ,
\end{gather}
where $M \subset {\mathbb Z}^n_+$ is a finite set of multiindices $\alpha=(\alpha_1, \dots, \alpha_n)$, $\xi^\alpha \equiv \xi_1^{\alpha_1} \cdots \xi_n^{\alpha_n}$.

If $\Phi(\xi_1, \dots, \xi_n; \lambda)$ is a versal deformation of function $f(\xi_1, \dots, \xi_n)$ of the form (\ref{mondef}), then
$\Phi(\xi_1, \dots, \xi_n; \lambda)+\big(\xi_{n+1}^2 + \xi_{n+2}^2\big)$ and $\Phi(\xi_1, \dots, \xi_n; \lambda)-\big(\xi_{n+1}^2 + \xi_{n+2}^2\big)$ are versal deformations respectively of functions $f(\xi_1, \dots, \xi_n)+\big(\xi_{n+1}^2 + \xi_{n+2}^2\big)$ and $f(\xi_1, \dots, \xi_n)-\big(\xi_{n+1}^2 + \xi_{n+2}^2\big)$ in $n+2$ variables, and have the same discriminant sets in the spaces of parameters $\lambda = \{\lambda_\alpha\}$. The homology groups (both absolute and modulo the boundary) of the middle dimensions of Milnor fibers of these three functions corresponding to the same non-discriminant parameter values~$\lambda$ are naturally isomorphic to one another. These isomorphisms preserve the Petrovskii condition (see \cite[Section~V.1.7]{APLT}). Therefore proving Theorem~\ref{maint} (in its Case~(a)) we can assume that for singularity classes $P_8$ we have $n=5$, $Q=\xi_4^2- \xi_5^2$, and for singularities $X_9$ or $J_{10}$ we have $n=3$, $Q=\xi_3^2$.

\begin{Proposition}\label{lem11}If function $f\colon \big({\mathbb C}^n,{\mathbb R}^n,0\big) \to ({\mathbb C}, {\mathbb R}, 0)$ is given by one of normal forms from Table~{\rm \ref{t4}}, the dimension $n$ is odd and the positive inertia index of quadratic form $Q$ also is odd, then the boundary of the odd Petrovskii class of any non-discriminant small perturbation~$f_\lambda$ of~$f$ is a non-trivial element of group $H_{n-2}(V_{\lambda} \cap \partial B)$.
\end{Proposition}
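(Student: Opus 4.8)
\emph{Proof proposal.} The plan is to turn the statement into the computation of a single homology class, to identify that class with the fundamental class of the real link of a suspension of $f$, and to detect its non-triviality by pairing it against the radical of the Milnor intersection form.

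First I would isolate the homological mechanism. The Milnor fibre $V_\lambda$ is homotopy equivalent to a bouquet of $(n-1)$-spheres, so $H_{n-2}(V_\lambda)=0$, and the long exact sequence of the pair $(V_\lambda,\partial V_\lambda)$ shows that the boundary homomorphism $\partial\colon H_{n-1}(V_\lambda,\partial V_\lambda)\to H_{n-2}(\partial V_\lambda)$ is onto with kernel $i_*H_{n-1}(V_\lambda)$; under Lefschetz duality the map $i_*$ is the Milnor intersection form $B$ of $f$, so $H_{n-2}(\partial V_\lambda)\cong\operatorname{coker}(B)$, a non-trivial group because $B$ is degenerate for every parabolic singularity (see \cite{AVG82}). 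Hence the boundary of the local Petrovskii class is non-zero exactly when this class does not lie in the image of $B$. I would then remark that $\partial(\text{Petrovskii class})$ is independent of the non-discriminant $f_\lambda$: two such perturbations are joined by a generic path meeting the discriminant only transversally at its smooth points, and across each crossing the local Petrovskii class either is unchanged (a crossing at an imaginary critical point of value $0$) or changes by a vanishing cycle, which lies in $i_*H_{n-1}(V_\lambda)=\ker\partial$. So $\partial(\text{Petrovskii class})$ is one and the same element of $H_{n-2}(\partial V_\lambda)$ for all perturbations and all local components, and it suffices to compute it for one convenient choice.

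Next I would pass to the even Petrovskii class through the suspension recalled before the statement. For $g:=f+\xi_{n+1}^2-\xi_{n+2}^2$ there is a natural Thom--Sebastiani isomorphism between the middle-dimensional homology of the Milnor fibres of $f_\lambda$ and of $g_\lambda$, absolute and relative to the boundary, compatible with the boundary maps, which carries the odd Petrovskii class of $f_\lambda$ to the even Petrovskii class of $g_\lambda$ (compare \cite[Section~V.1.7]{APLT}). The even Petrovskii class is the fundamental class of the real part $V_{g_\lambda}\cap\mathbb{R}^{n+2}$, oriented as the boundary of $\{g_\lambda\le 0\}$, so its boundary is the fundamental class of the real link $\{g_\lambda=0\}\cap\mathbb{R}^{n+2}\cap\partial B'$; evaluating at $\lambda=0$ --- permissible by the previous paragraph, since the zero set of $g$ is still smooth on $\partial B'$ and its real affine part is a weighted cone --- this equals the fundamental class $[K_{\mathbb{R}}(g)]$ of the real link of $g$ inside its complex link $K_{\mathbb{C}}(g)$. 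Here the quadratic tail $Q+\xi_{n+1}^2-\xi_{n+2}^2$ of $g$ has even positive inertia index, so $g$ is, up to a harmless $\pm(\xi^2+\eta^2)$-stabilisation, the minimal parabolic germ of Table~\ref{t4} plus a quadratic form of even positive index.

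It then remains to prove $[K_{\mathbb{R}}(g)]\ne 0$ in $H_n(K_{\mathbb{C}}(g))=\operatorname{coker}(B^g)$ ($B^g$ being again degenerate, by Thom--Sebastiani). For this I would use the Poincaré intersection pairing of the closed oriented $(2n+1)$-manifold $K_{\mathbb{C}}(g)=\partial V_g$ between $H_n$ and $H_{n+1}$: it identifies $H_{n+1}(\partial V_g)$ with the radical of $B^g$, and, by the standard adjunction between boundary and inclusion maps, $\langle [K_{\mathbb{R}}(g)],\rho\rangle=\pm\,[V_g\cap\mathbb{R}^{n+2}]\cdot\rho$ for $\rho$ in that radical --- the intersection number in $V_g$ of the real part of a morsification of $g$ with a radical vanishing cycle. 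The proof is thereby reduced to exhibiting a real morsification of $g$ together with a radical cycle $\rho$ (for instance one built from the imaginary root of the affine root system $\widetilde E_6$, $\widetilde E_7$ or $\widetilde E_8$ attached to $P_8$, $X_9$, $J_{10}$) against which the class of the real part, expanded in a distinguished basis of vanishing cycles, has non-zero intersection number. This is a finite check, one for each of the seven normal forms of Table~\ref{t4} (after the even-index normalisation of $Q$), and it is exactly here that the morsification-enumeration program of the Introduction is used: for each explicit morsification it produces one reads off both the real part and the radical cycle and evaluates the pairing. The main obstacle is this computational step: one must ensure that the real part escapes the image of the intersection form for \emph{all} seven forms, and, should the radical pairing degenerate for some form so that $[K_{\mathbb{R}}(g)]$ turns out to be a torsion class, detect it by a finer invariant of $K_{\mathbb{C}}(g)$ such as its linking form.
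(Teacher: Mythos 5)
Your reduction coincides with the paper's: using the exact sequence of $(V_\lambda,\partial V_\lambda)$ and Lefschetz duality, non-triviality of $\partial(\text{Petrovskii class})$ is equivalent to the vector $\Pi$ of its intersection indices with vanishing cycles not lying in the lattice spanned by the rows of the intersection matrix $B$, and both you and the paper reduce to a single morsification by noting that wall-crossings alter $\Pi$ by rows of $B$. Where you diverge is the detection step. The paper's proof simply computes $B$ and $\Pi$ for one morsification of each class (citing the Gusein-Zade--A'Campo method for the corank-$2$ cases, \cite{Gab} for $P_8^1$, \cite[Section~7]{V16} for $P_8^2$, and the explicit Petrovskii formulas from \cite{APLT}) and verifies the non-membership in the row span directly over $\mathbb{Z}$. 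You instead pass through the suspension to the even Petrovskii class, interpret its boundary as the fundamental class of the real link, and pair it against a radical cycle $\rho$ via Poincar\'e duality on the link, using the adjunction $\langle\partial a,\rho\rangle=\pm\,a\cdot i_*\rho$. This is conceptually appealing and ties in nicely with the imaginary root of the affine $E$-lattices, but be aware that it is only a \emph{sufficient} criterion: pairing with the radical detects only the free part of $\operatorname{coker}B$, and would miss the case where $\partial(\text{Petrovskii class})$ is a non-zero torsion element, which the paper's integer-linear-combination formulation handles automatically. You correctly flag this and propose the linking form as a backup, so the gap is acknowledged rather than hidden, but it is not resolved in the proposal. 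One small misattribution: the morsification-enumeration programs \cite{pro,pro2} are used in the paper for Statement~2 of Theorem~\ref{maint}, not for Proposition~\ref{lem11}; the latter is settled by the hand computations and citations listed above.
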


\begin{proof}[Scheme of the proof] A basis in $H_{n-1}(V_\lambda)$ can be composed of {\em vanishing cycles} $\Delta_i$, $i=1, \dots, \mu(f)$, see, e.g., \cite{AVG84,APLT}.
By the Poincar\'e duality and exact sequence of pair $(V_\lambda, \partial V_\lambda)$, the desired non-triviality of the boundary for a morsification $f_\lambda$ is equivalent to the following condition: vector $\Pi(\lambda)$ of intersection indices of the odd Petrovskii class with these basic vanishing cycles is not an integer linear combination of rows of intersection matrix $\{\langle \Delta_i,\Delta_j\rangle \}$ of these cycles.

It is enough to prove the latter property for an arbitrary single non-discriminant real morsification $f_\lambda$.
Indeed, by the exact formulas for the local Petrovskii classes (see \cite[Sections~V.1 and~V.4]{APLT}) expressing them in terms of intersection indices and the Morse indices of real critical points, vectors $\Pi(\lambda)$ for morsifications~$f_\lambda$ from the neighbouring components of the complement of the discriminant differ by adding or subtracting a row of the intersection matrix.

So, it remains to calculate all these data for one arbitrary morsification of any of our singularity classes. Vectors~$\Pi(\lambda)$ for these morsifications follow after that by the above-mentioned explicit formulas, and give the promised result.

The intersection indices of vanishing cycles for corank~2 singularities $X_9$ and $J_{10}$ can be found by the Gusein-Zade--A'Campo method, see~\cite{Gusein-Zade74} and~\cite{A'Campo75}, starting, e.g., from the morsifications discussed in Proposition~\ref{perts} below. For singularity class~$P_8^1$ (represented by function $\xi_1^3 + \xi_2^3+ \xi_3^3 + Q$) these indices follow from the main theorem of~\cite{Gab}, and for class $P_8^2$ they were calculated in \cite[Section~7]{V16}.
\end{proof}

\begin{Remark}\label{rem3} Statement 1 of Theorem~\ref{maint} follows immediately from this proposition and Proposition~\ref{sence}.
\end{Remark}

\section[Proof of Statement 2 of Theorem \ref{maint} for parabolic singularities of corank 2]{Proof of Statement 2 of Theorem \ref{maint}\\ for parabolic singularities of corank 2}\label{cor2}

\subsection{Examples}\label{theex}

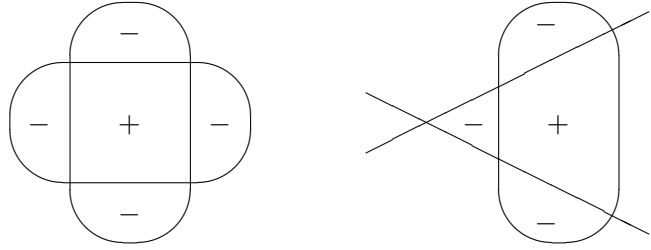
\begin{figure}[t]\centering\unitlength=0.8mm
\begin{picture}(40,42)
\put(20,20){\oval(20,40)}
\put(18,3.5){$-$}
\put(18,33.5){$-$}
\put(3,18.5){$-$}
\put(33,18.5){$-$}
\put(18,18.5){$+$}
\put(20,20){\oval(40,20)}
\end{picture}
\qquad
\begin{picture}(50,42)
\put(40,20){\oval(20,40)}
\put(8,15){\line(2,1){47}}
\put(8,25){\line(2,-1){47}}
\put(24,18.5){$-$}
\put(36,2){$-$}
\put(36,35){$-$}
\put(38,18.5){$+$}
\end{picture}

\caption{$+X_9$ (left), $X_9^1$ (right).}\label{X90}
\end{figure}

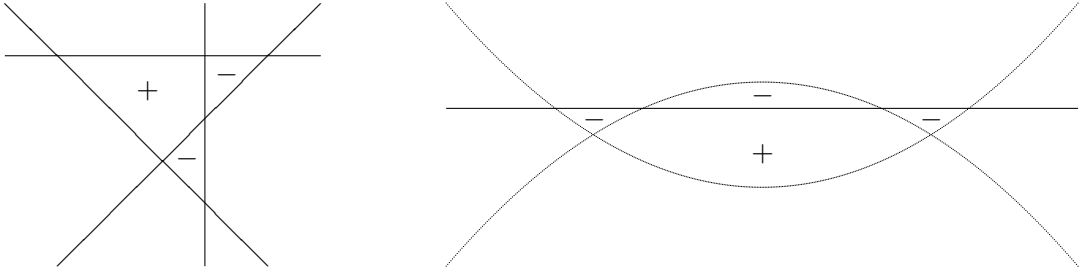
\begin{figure}\centering\unitlength=0.7mm
\begin{picture}(60,53)
\put(0,40){\line(1,0){60}}
\put(38,0){\line(0,1){50}}
\put(10,0){\line(1,1){50}}
\put(50,0){\line(-1,1){50}}
\put(40,35){$-$}
\put(25,32){$+$}
\put(32.5,19){$-$}
\end{picture} \qquad \qquad
\begin{picture}(120,53)
\put(0,30){\line(1,0){120}}
\bezier{400}(0,50)(60,-20)(120,50)
\bezier{400}(0,0)(60,70)(120,0)
\put(58,20){$+$}
\put(26,26.5){$-$}
\put(90,26.5){$-$}
\put(58,31){$-$}
\end{picture}
\caption{$X_9^2$ (left), $J_{10}^3$ (right).}\label{X92}
\end{figure}

\begin{figure}\centering\unitlength=0.8mm
\begin{picture}(120,57)
\bezier{300}(60,30)(30,40)(20,40)
\bezier{300}(60,30)(30,20)(20,20)
\bezier{200}(20,40)(0,40)(0,30)
\bezier{200}(20,20)(0,20)(0,30)
\bezier{300}(60,30)(90,40)(100,40)
\bezier{300}(60,30)(90,20)(100,20)
\bezier{200}(100,40)(120,40)(120,30)
\bezier{200}(100,20)(120,20)(120,30)
\bezier{300}(60,55)(80,0)(90,0)
\bezier{300}(120,55)(100,0)(90,0)
\put(20,29){$-$}
\put(65,29){$-$}
\put(114,29){$-$}
\put(90,29){$+$}
\put(90,12){$-$}
\end{picture}
\caption{$J_{10}^1$.} \label{J101}
\end{figure}

\begin{Proposition}\label{perts}
Any function $\varphi(\xi_1,\xi_2)$ in two variables defined by one of normal forms indicated in Table~{\rm \ref{t4}} for singularities of types $+X_9$, $X_9^1$, $X_9^2$, $J_{10}^3$, or $J_{10}^1$ has an arbitrarily small perturbation, whose set of zeros and domains of constant signs of values look topologically as shown in Figs.~{\rm \ref{X90}} $($left and right$)$, {\rm \ref{X92}} $($left and right$)$, or~{\rm \ref{J101}} respectively. $($The non-closed curves in these pictures are assumed to be continued to the infinity in~${\mathbb R}^2.)$

In particular, any versal deformation of $\varphi$ contains perturbations with this topological picture.
\end{Proposition}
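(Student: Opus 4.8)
The plan is to produce, for each of the five classes, one explicit family $g_\varepsilon$ of small perturbations of the (two-variable) normal form $\varphi$ whose zero set is manifestly a union of smooth conics in general position realizing the picture in the corresponding figure, to read off the sign regions as the product rule for signs of the factors, and finally to deduce the assertion about versal deformations from the standard properties of versality recalled in Section~\ref{prove}. So the whole problem reduces to exhibiting these perturbations and checking, class by class, that they do reproduce Figs.~\ref{X90}--\ref{J101}.

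The guiding remark is that each normal form in Table~\ref{t4} is quasi-homogeneous and that its principal part factors: into two coordinate lines times a positive-definite quadratic for $X_9^1$, into four distinct lines for $X_9^2$, into a line and two parabolas mutually tangent at $0$ for $J_{10}^3$, into a line times a quartic positive away from $0$ for $J_{10}^1$, and, over $\mathbb{C}$, into two conjugate quadratic factors for $+X_9$. Accordingly I would take $g_\varepsilon$ to be the corresponding product of \emph{shifted} lines and conics, each shift carrying a small parameter: for $X_9^2$, $g_\varepsilon=(\xi_1-\varepsilon a_1)(\xi_2-\varepsilon a_2)(\xi_1+\xi_2-\varepsilon a_3)(\xi_1+\alpha\xi_2-\varepsilon a_4)$ with generic $a_i$ (four lines in general position); for $X_9^1$, a product of two shifted coordinate lines with the small ellipse $\{\xi_1^2+\alpha\xi_1\xi_2+\xi_2^2=\varepsilon\}$, the lines crossing inside the ellipse; for $+X_9$ with $-2<\alpha<2$, writing $\alpha=2-4p^2$ with $0<|p|<1$, $g_\varepsilon=(\xi_1^2+2p\xi_1\xi_2+\xi_2^2-\varepsilon)(\xi_1^2-2p\xi_1\xi_2+\xi_2^2-\varepsilon)$, a product of two genuine ellipses meeting transversally at $(0,\pm\sqrt\varepsilon)$ and $(\pm\sqrt\varepsilon,0)$, and analogously a product of two shifted axis-aligned ellipses for $\alpha>2$; for $J_{10}^3$, $g_\varepsilon=\xi_1(\xi_1-\xi_2^2-\varepsilon a)(\xi_1-\alpha\xi_2^2-\varepsilon b)$ (a line and two parabolas meeting transversally rather than tangentially); and for $J_{10}^1$, $g_\varepsilon=(\xi_1-\varepsilon b)\bigl(\xi_1^2+\alpha\xi_1\xi_2^2+\xi_2^4-\varepsilon c\,\xi_2^2+\varepsilon d\,\xi_2^3\bigr)$, where for $\varepsilon c>0$ the quartic factor acquires a node at $0$ and becomes a figure-eight, cut by the line $\{\xi_1=\varepsilon b\}$. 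In every case one computes that the leading quasi-homogeneous part of $g_\varepsilon$ equals $\varphi$ while $g_\varepsilon-\varphi=O(\varepsilon)$ uniformly on a fixed Milnor ball, so these are arbitrarily small perturbations, and the displayed factorizations make $\{g_\varepsilon=0\}$ and its sign decomposition explicit. (These are also exactly the curves with only transversal self- and mutual intersections that serve as input for the A'Campo--Gusein-Zade computation of the intersection form needed in the proof of Proposition~\ref{lem11}.)

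The main obstacle --- or rather the only real work --- is the class-by-class bookkeeping needed to match the figures exactly: one must choose the signs and orders of magnitude of the shift parameters so that (i) each shifted conic is a genuine nonempty ellipse / figure-eight / pair of parabolas and not a degenerate or empty locus (e.g.\ $|p|<1$ keeps both $+X_9$ factors real; $\varepsilon c>0$ is what turns the $J_{10}^1$ quartic into a figure-eight rather than an oval or two ovals), (ii) the curves meet each other transversally in exactly the number of points shown inside the ball --- four transversal crossings of the two ellipses for $+X_9$, the line meeting each parabola and each loop of the figure-eight in the prescribed way for the $J_{10}$ cases, the four lines in general position for $X_9^2$ --- and (iii) the sign pattern comes out as in the figure, which for the symmetric normal forms may require breaking the $\xi_2\mapsto-\xi_2$ symmetry by a further small odd term (the role of $\varepsilon d\,\xi_2^3$ above). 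All of this amounts to solving a handful of quadratics and checking inequalities such as $\varepsilon>b^2(4-\alpha^2)/4$ (for a shifted coordinate line to meet the small ellipse in $X_9^1$); I expect it to be entirely routine once the pictures are drawn, but it is where all the actual verification sits.

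Finally, the last sentence of the proposition is immediate: by the versality properties recalled in Section~\ref{prove}, any sufficiently small perturbation of $\varphi$ --- in particular each $g_\varepsilon$ --- is carried by a local diffeomorphism of the source onto a member $\varphi_\lambda$ of a prescribed versal deformation, and such a diffeomorphism preserves the germ at $0$ of the pair (zero set, sign decomposition); hence $\varphi_\lambda$ realizes the same topological picture.
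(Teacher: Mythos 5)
The paper's entire proof is the single sentence ``Proof is elementary,'' so your construction is precisely the kind of fill-in the author has in mind, and it takes the natural route: factor each normal form in Table~\ref{t4}, shift the factors independently, read the sign regions off the product, and invoke versality for the last sentence. The plan and the five concrete families look right; the only place where ``analogously'' hides a genuine wrinkle is $+X_9$ with $\alpha>2$, where $\varphi=(\xi_1^2+a\xi_2^2)(\xi_1^2+b\xi_2^2)$ with $ab=1$, and subtracting the \emph{same} $\varepsilon$ from both factors gives two ellipses tangent at $(\pm\sqrt\varepsilon,0)$ rather than meeting transversally in four points; you need unequal shifts chosen so that each ellipse is wider than the other in one direction (e.g.\ $\varepsilon_2<\varepsilon_1<a^2\varepsilon_2$). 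You are also right that the real verification sits in matching each picture's sign pattern and number of crossings, and in checking that terms like $\varepsilon c\,\xi_2^2$ and $\varepsilon d\,\xi_2^3$ in the $J_{10}^1$ case are still $O(\varepsilon)$ perturbations on a fixed Milnor ball, which they are.
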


 Proof is elementary.

For any of these five functions $\varphi(\xi_1, \xi_2)$ let us add a small positive constant to
its perturbation shown in the corresponding picture in such a way that all critical values at the minima points will remain negative. All the other critical values will become positive, therefore the obtained function is non-discriminant. Denote this function by $\tilde \varphi$ and define function $\tilde f\colon {\mathbb C}^3 \to {\mathbb C}$ by
\begin{gather}\label{tfi}
\tilde f(\xi_1, \xi_2, \xi_3) \equiv \tilde \varphi(\xi_1,\xi_2)+\xi^2_3 ;
\end{gather}
this is a perturbation of function
\begin{gather}\label{fi}
f \equiv \varphi(\xi_1,\xi_2)+\xi^2_3 .
\end{gather}

\begin{Proposition}\label{imag}
If $f$ is the function \eqref{fi}, where $\varphi$ is one of our five polynomials from Table~{\rm \ref{t4}}, and~$\tilde f$ is the perturbation of~$f$ just defined, then any Morse perturbation $f_\lambda$ of $f$ lying in the same component of the complement of the discriminant variety of a deformation of~$f$ as~$\tilde f$ can have no more than one pair of imaginary critical points $($so that the number of its real critical points is equal to either~$\mu(f)$ or~$\mu(f)-2)$.
\end{Proposition}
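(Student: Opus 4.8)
Proposition~\ref{imag} claims that a Morse perturbation $f_\lambda$ of $f = \varphi(\xi_1,\xi_2)+\xi_3^2$ lying in the same component as $\tilde f$ has at most one pair of imaginary critical points. The plan is to count real critical points of $\tilde f$ directly, and then to argue that this count is locally constant on the component.

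\textbf{Step 1: Reduce to counting critical points of $\tilde\varphi$.} Since $\tilde f(\xi_1,\xi_2,\xi_3) = \tilde\varphi(\xi_1,\xi_2) + \xi_3^2$, the critical points of $\tilde f$ are exactly the products (critical point of $\tilde\varphi$)$\,\times\,\{\xi_3 = 0\}$, and a critical point of $\tilde f$ is real if and only if the corresponding critical point of $\tilde\varphi$ is real. Likewise $\mu(f) = \mu(\varphi)$. So it suffices to show that $\tilde\varphi$ — the perturbation of $\varphi$ drawn in Fig.~\ref{X90}, \ref{X92}, or~\ref{J101} with a small positive constant added — has either $\mu(\varphi)$ or $\mu(\varphi)-2$ real critical points, i.e.\ at most one imaginary conjugate pair.

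\textbf{Step 2: Read off the real critical points from the pictures.} For each of the five plane curves shown, the complement consists of several connected regions, each labelled by the sign of $\tilde\varphi$ there. After adding the small positive constant (small enough to keep the minima negative), the bounded "$-$" regions each still contain a real minimum, the "$+$" regions contain real maxima or saddles, and the real saddle points sit on the curve between regions. One counts these using the Euler-characteristic / Morse-theoretic bookkeeping for plane real morsifications: the number of maxima minus saddles plus minima, weighted appropriately, is governed by the topology of the pictured arrangement. In each of the five cases $\mu(\varphi)$ is $9$ ($X_9$) or $10$ ($J_{10}$); I expect the count of visible real critical points to come out to $\mu(\varphi)$ in four of the cases and $\mu(\varphi)-2$ in one (where two conjugate imaginary critical points are unavoidable for that real topological type), giving the claimed bound "no more than one pair". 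This is the step where one must be careful: the pictures in Figs.~\ref{X90}--\ref{J101} show the zero set and sign domains but not the critical points themselves, so one has to verify that each bounded sign-domain contributes exactly one real extremum and that no extra real saddles are hidden; the branches running to infinity must also be handled (as the statement of Proposition~\ref{perts} notes, they are continued to infinity in $\mathbb{R}^2$), so the count is genuinely a count on $\mathbb{R}^2$ rather than on a compactification.

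\textbf{Step 3: Propagate the count through the whole component.} Finally, the number of imaginary critical points is constant on each connected component of the complement of the discriminant: crossing a wall of the discriminant either merges two real critical points into one degenerate point and then re-separates them (real count unchanged, or changed by a morsification of a higher $A_k$ which cannot create an imaginary pair from nothing along a path that stays non-discriminant) or, more to the point, no path inside a single component can change the real/imaginary partition of the $\mu(f)$ critical points, since that partition is a discrete invariant of the component. Hence any Morse $f_\lambda$ in the same component as $\tilde f$ has the same number of imaginary critical points as $\tilde f$, namely at most one pair. The main obstacle is Step~2 — the honest verification, for each of the five normal forms, that the morsification exhibited has the asserted number of real critical points; everything else is formal.
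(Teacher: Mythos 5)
Your Step~3 contains the fatal flaw: the number of real critical points of $f_\lambda$ is \emph{not} an invariant of the connected components of the complement of the discriminant~$\Sigma$. Recall that $\Sigma$ is defined as the set of $\lambda$ for which $V_\lambda = f_\lambda^{-1}(0)\cap B$ is singular, i.e., for which some (real or complex) critical point has critical value~$0$. Two real Morse critical points with a common \emph{nonzero} critical value can collide into a degenerate ($A_2$) critical point and then re-emerge as a complex-conjugate pair, all while staying inside a single component of ${\mathbb R}^l\setminus\Sigma$; the discriminant is never touched because the zero level set remains smooth throughout. So the ``real/imaginary partition'' is not a discrete invariant of the component, and your propagation argument collapses. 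Your Step~2 is also not a proof: you state ``I expect the count of visible real critical points to come out to $\mu(\varphi)$ in four of the cases and $\mu(\varphi)-2$ in one,'' which is a conjecture, not a verification, and in fact the plane pictures of Figs.~\ref{X90}--\ref{J101} show exactly $\mu(\varphi)$ real critical points, so that guess is also off.

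The paper's actual argument fixes precisely the defect in your Step~3 by replacing the crude count of real critical points by the \emph{sum of local indices} of the vector field $\operatorname{grad} f_\lambda$ at the real critical points with negative critical values, and separately at those with positive critical values. Each such sum \emph{is} invariant on components of ${\mathbb R}^l\setminus\Sigma$: when a pair of real critical points merges at a nonzero critical value, the two points have adjacent Morse indices, hence opposite local indices $\pm1$, so their contribution cancels; and a critical value cannot cross $0$ without crossing $\Sigma$. For the morsifications $\tilde f$ of the five singularities the paper computes these two invariant sums to be $(4,-3)$, $(3,-4)$, $(2,-5)$, $(3,-5)$, $(4,-4)$, respectively. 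Since each Morse point contributes $\pm1$, the absolute values of the two sums force at least $|4|+|{-}3|=7$ real critical points for the $X_9$ cases (and $\mu=9$), and at least $8$ for the $J_{10}$ cases (and $\mu=10$), hence at most one conjugate imaginary pair. That is the missing idea: you need a signed, not unsigned, count, and the sign has to come from $\operatorname{grad} f_\lambda$.
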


\begin{proof} The sum of local indices of vector field $\operatorname{grad} f_\lambda$ at all real critical points of $f_\lambda$ with negative (respectively, positive) critical values is an invariant of components of the complement of the discriminant. For our five perturbations these sums are equal to: 4 and $-3$ for $+X_9$, 3 and $-4$ for $X_9^1$, 2 and $-5$ for $X_9^2$, 3 and $-5$ for $J_{10}^3$, 4 and $-4$ for $J_{10}^1$. The index of any Morse critical point is equal to $1$ or $-1$, hence $f_\lambda$ should have at least 7 real critical points in the case of any $X_9$ singularity and at least 8 in the case of $J_{10}$.
\end{proof}

\begin{Proposition}\label{mp}
Let $f$ be the function \eqref{fi} where $\varphi$ is given by one of formulas of Table~{\rm \ref{t4}} for one of five singularity classes $+X_9$, $X_9^1$, $X_9^2$, $J_{10}^3$ or $J_{10}^1$; let $C \subset {\mathbb R}^l$ be the component of the complement of discriminant variety $\Sigma$ in the parameter space of a deformation of $f$ containing the corresponding perturbation~\eqref{tfi}; let
$\tilde \lambda \in \Sigma$ be an arbitrary discriminant point in the boundary of this component $C$ which is sufficiently close to the origin in the parameter space of the deformation, and all critical points of $f_{\tilde \lambda}$ with zero critical value are not parabolic $($i.e., do not belong to the same singularity class as~$f)$. Then for any parameter value $\overline{\lambda} \in C$, which is sufficiently close to $\tilde \lambda$, and any critical point $a_j\in B$ of $f_{\tilde \lambda}$ with critical value~$0$ the corresponding localized odd Petrovskii class in $\tilde H_{2}(V_{\overline{\lambda}} \cap B_j , V_{\overline{\lambda}} \cap \partial B_j)$ is equal to~$0$.
\end{Proposition}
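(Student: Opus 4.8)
The plan is to reduce the proposition, via the functoriality of Petrovskii classes, to a statement about \emph{simple} singularities, and to settle that statement from the known classification of local Petrovskii conditions at simple points together with the explicit morsifications of Proposition~\ref{perts}.

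First, apply Proposition~\ref{funct}(1): for $\overline{\lambda}\in C$ sufficiently close to $\tilde\lambda$, the localized odd Petrovskii class at $a_j$ equals the image, under the restriction map~\eqref{functor}, of the odd Petrovskii class of the germ $(f_{\tilde\lambda},a_j)$ computed with respect to the morsification $f_{\overline{\lambda}}$ near $a_j$; equivalently, it is the odd Petrovskii class of that germ for the component of the discriminant complement of a versal deformation of $(f_{\tilde\lambda},a_j)$ that is ``cut out'' by $C$. Since $a_j$ is a critical point of a small perturbation of the parabolic germ $f=\varphi+\xi_3^2$ and, by hypothesis, is not itself parabolic, Proposition~\ref{prim} shows that $(f_{\tilde\lambda},a_j)$ is a \emph{simple} singularity, of one of the types $A_k$, $D_k$, $E_6$, $E_7$, $E_8$. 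Thus it suffices to decide, for every such simple germ occurring on the zero level of $f_{\tilde\lambda}$ over the boundary of our particular component $C$, which component of its local discriminant complement $C$ approaches, and to check that the local Petrovskii condition holds there.

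The identification of that local component is where the specific morsification $\tilde f=\tilde\varphi(\xi_1,\xi_2)+\xi_3^2$ of Figs.~\ref{X90}--\ref{J101} enters. In $\tilde f$ the only real critical points with negative critical value are local minima, while its positive-value critical points have Morse index $1$ or $2$ (coming from the saddles and the single maximum of $\tilde\varphi$ in each figure). By the formulas for the local Petrovskii classes of \cite[Sections~V.1 and~V.4]{APLT}, the (even suspension of the) Petrovskii class is an explicit integer combination, with coefficients prescribed by the Morse indices and signs, of the vanishing cycles attached to the negative-value critical points; hence the localized class at $a_j$ is determined by the Morse indices and the intersection data of those critical points of $f_{\overline{\lambda}}$ that sit near $a_j$. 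Since the number of real critical points of $f_{\overline{\lambda}}$ ($=\mu(f)$ or $\mu(f)-2$) and the index sums over its negative-value and positive-value critical points are invariants of the component $C$ (as in the proof of Proposition~\ref{imag}), this data is pinned down by $\tilde f$; tracking it from $\tilde{\mathbf x}$ to $\tilde\lambda$ identifies the adjacency realized near each $a_j$ and the component $C$ approaches. One then reads off vanishing of the Petrovskii class there from the classification of lacunas at simple singular points in \cite{Vassiliev86} --- equivalently, by Theorem~\ref{v92}: the component $C$ singles out near $a_j$ has the Davydova condition at all its smooth boundary points and no ``bigger-side'' cuspidal edge in its boundary, because either failure would be visible already in the sign/index data above.

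The main obstacle is precisely this finite but intricate case analysis. For each of the five classes $+X_9$, $X_9^1$, $X_9^2$, $J_{10}^3$, $J_{10}^1$ one must list the simple germs actually occurring on the zero level over $\partial C$ and, for each, verify that $C$ approaches a lacunary component --- equivalently, that the integer combination of vanishing cycles produced by the Petrovskii formula is an integer combination of rows of the intersection matrix and so represents a boundary in $(V_{\overline{\lambda}}\cap B_j,V_{\overline{\lambda}}\cap\partial B_j)$. This enumeration and check is exactly what the program counting topological types of morsifications mentioned in the Introduction carries out; the intersection-form input it requires for these corank-$2$ classes is the Gusein--Zade--A'Campo matrix computed from the morsifications of Proposition~\ref{perts}, the same data already used in Proposition~\ref{lem11}.
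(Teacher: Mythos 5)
Your proposal follows essentially the same route as the paper: localize via Proposition~\ref{funct}, observe via Proposition~\ref{prim} that each $a_j$ is a \emph{simple} critical point, invoke Theorem~\ref{v92}'s geometric characterization of lacunas at simple points (Davydova condition at smooth boundary points plus no ``bigger-side'' cuspidal edge), and delegate the remaining combinatorial verification to the morsification-enumerating program with the Gusein-Zade--A'Campo intersection data from the morsifications of Proposition~\ref{perts}.

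The one place you are too quick is the sentence claiming that ``tracking'' the index/intersection data from $\tilde{\mathbf{x}}$ to $\tilde\lambda$ ``identifies the adjacency realized near each $a_j$.'' The discriminant point $\tilde\lambda$ ranges over the entire boundary $\partial C$, and the invariants of $C$ from Proposition~\ref{imag} (index sums, number of real critical points) do not single out one adjacency --- they only restrict the possibilities. This is precisely why the paper cannot read off the answer by hand: it first reduces via Lemma~\ref{lem8} (which you omit) to the case of a single critical point $a_1$ with value $0$, then uses Theorem~\ref{v92} to split into the Morse case and the $A_2$/bigger-component case, translates these into the two explicit combinatorial Conditions~\ref{cond1} and~\ref{cond2} on a single morsification (Morse indices of the critical points with smallest positive and largest negative value, a certain intersection index), and only then runs the program, which exhaustively enumerates \emph{all} virtual morsifications reachable inside $C$ by admissible surgeries and verifies that none satisfies either Condition. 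Your final paragraph does acknowledge that ``this enumeration and check'' is what the program does, so the gap is one of precision rather than of substance, but the reduction to Conditions~\ref{cond1}--\ref{cond2} (and Lemma~\ref{lem8}) is what makes the programmatic check well-posed, and your sketch should state it.
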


Statement 2 (Case~(a)) of Theorem~\ref{maint} for these five singularity classes follows immediately from this proposition and Proposition~\ref{funct}.

Proof of Proposition \ref{mp} takes the remaining part of this section. Everywhere in it we assume that $f_{\tilde \lambda}$ satisfies the conditions of this proposition.

We can and will assume that our deformation is versal, otherwise we can expand it to a versal one. Let $l$ be the dimension of its parameter space.

Let $\mu_j$, $j=1,\dots,$ be the Milnor numbers of all critical points $a_j$ of $f_{\tilde \lambda}$ with critical value 0. Suppose that for one of these points, say $a_1$, the localized Petrovskii class in $H_{2}(V_\lambda \cap B_1, V_\lambda \cap \partial B_1)$, where $\lambda \in C$ is very close to $\tilde \lambda$, is non-trivial. In Section~\ref{condit} we will show that in this case component~$C$ should contain functions $f_{\overline{\lambda}}$ satisfying at least one of two conditions described there; in Sections~\ref{propr} and~\ref{modif} we present a combinatorial program proving that such functions actually do not exist.

\subsection{Conditions} \label{condit}
By Statement~2 of Proposition~\ref{funct}, critical point~$a_1$ mentioned in the previous paragraph is real.

\begin{Lemma}\label{lem8}
We can assume that $f_{\tilde \lambda}$ has only one this critical point $a_1$ with critical value~$0$ $($i.e., if there are several such critical points of~$f_{\tilde \lambda}$, then there is another point $\tilde \lambda'$, $\tilde \lambda' \approx \tilde \lambda,$ in the boundary of~$C$ with unique such critical point and non-trivial localization of the Petrovskii class at neighboring points of~$C)$.
\end{Lemma}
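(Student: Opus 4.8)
The plan is to reduce from the general case, where $f_{\tilde\lambda}$ may have several critical points $a_1,\dots,a_s$ with zero critical value among which $a_1$ carries a nontrivial localized odd Petrovskii class, to the situation of a single zero-value critical point retaining nontriviality. First I would use the fact that the discriminant $\Sigma$ of a versal deformation is, near each of its generic points, a product of the discriminants of the individual local singularities at the $a_j$'s; more precisely, by the multisingularity picture recalled after Proposition \ref{funct}, a neighbourhood of $\tilde\lambda$ in the parameter space splits (after a diffeomorphism) as a product of transversal slices, one versal‑deformation parameter space $\Lambda_j$ for each germ $(f_{\tilde\lambda},a_j)$, times a complementary factor along which all the $a_j$ persist unchanged. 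The point $\tilde\lambda$ corresponds to the origin in every $\Lambda_j$; the component $C$ corresponds near $\tilde\lambda$ to a product $C_1\times\cdots\times C_s$ of components of the complements of the discriminants $\Sigma_j\subset\Lambda_j$ (times the complementary factor).

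Next I would move $\tilde\lambda$ to a new boundary point $\tilde\lambda'$ of $C$ by perturbing the coordinates in all factors $\Lambda_j$ with $j\ge 2$ so that the corresponding germs are simultaneously smoothed — i.e.\ choose in each $\Lambda_j$, $j\ge2$, a nondiscriminant point in $\bar C_j$ arbitrarily close to the origin — while keeping the first factor at the origin of $\Lambda_1$. This yields a point $\tilde\lambda'\in\Sigma\cap\bar C$, arbitrarily close to $\tilde\lambda$ hence still close to the origin, at which $f_{\tilde\lambda'}$ has a single critical point with zero critical value, namely (a point near) $a_1$ with the same local singularity type as $(f_{\tilde\lambda},a_1)$. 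Since by hypothesis this type is not parabolic, $\tilde\lambda'$ still satisfies the running hypothesis of Proposition \ref{mp}. Finally I would check that the localized odd Petrovskii class at $a_1$ is unchanged: for $\overline\lambda\in C$ close to $\tilde\lambda'$, the localized class in $\tilde H_2(V_{\overline\lambda}\cap B_1,\,V_{\overline\lambda}\cap\partial B_1)$ is, by Statement 1 of Proposition \ref{funct}, the image of the ambient odd Petrovskii class under the restriction map, and this image depends only on the germ $(f_{\tilde\lambda'},a_1)$ together with the choice of component $C_1$ in $\Lambda_1$ — both of which coincide with the data at $\tilde\lambda$. Hence it remains nontrivial, as required.

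The only subtlety — and the step I expect to be the main obstacle — is justifying that the localized Petrovskii class genuinely depends only on the first factor $C_1$ and not on the further components $C_j$, $j\ge2$: a priori the restriction map $\tilde H_{n-1}(V_{\overline\lambda},\partial V_{\overline\lambda})\to\tilde H_{n-1}(V_{\overline\lambda}\cap B_1,\,V_{\overline\lambda}\cap\partial B_1)$ is applied to the ambient class, which is built from the Morse data of \emph{all} critical points of $f_{\overline\lambda}$. Here one uses the explicit description of the odd Petrovskii class in terms of the Morse indices of the real critical points and the intersection form (cited from \cite[Sections V.1 and V.4]{APLT}): moving within $C_j$ for $j\ge2$ changes the ambient class by cycles supported in the balls $B_j$, $j\ge2$, which lie in $\overline{(B\setminus B_1)}$ and therefore map to zero in $\tilde H_{n-1}(V_{\overline\lambda}\cap B_1,\,V_{\overline\lambda}\cap\partial B_1)$. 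Thus the localized class at $a_1$ is invariant under the passage from $\tilde\lambda$ to $\tilde\lambda'$, and the lemma follows.
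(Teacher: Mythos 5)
Your proposal takes essentially the same route as the paper's own proof: both use the multisingularity splitting of a neighbourhood of $\tilde\lambda$ in the versal parameter space into a product of bases of versal deformations of the germs $(f_{\tilde\lambda},a_j)$, times a trivial complementary factor, and then move the factors with $j\ge 2$ to nondiscriminant points inside the corresponding $C_j$ while keeping the $j=1$ factor at its origin. This produces the desired boundary point $\tilde\lambda'$ of $C$ with a single zero-value critical point. Your closing observation that the localized class at $a_1$ is unchanged is fine, although somewhat more laboured than necessary: the ambient Petrovskii class is a constant of the component $C$ of the complement of the discriminant (no surgeries of crossing are performed), and Proposition~\ref{funct} identifies the localization as its image under a restriction map that varies continuously along a path in $C$ from a point near $\tilde\lambda$ to one near $\tilde\lambda'$, so nontriviality is preserved.

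The one genuine gap is that you silently assume all the critical points $a_j$ with zero critical value are \emph{real}. The discriminant $\Sigma$ used here (see the beginning of Section~2.2) is strictly larger than the real discriminant: it also contains parameter values $\tilde\lambda$ for which $f_{\tilde\lambda}$ has \emph{imaginary} critical points in $B$ with critical value $0$. For such an $a_j$ the factor $\Lambda_j$ in your product decomposition is not the parameter base of a real versal deformation, and ``choosing a nondiscriminant point in $\bar C_j$'' does not directly make sense. The paper treats this case explicitly: Proposition~\ref{imag} guarantees that there can be at most one complex-conjugate pair of such imaginary critical points and that they are Morse, and the decomposition of the neighbourhood of $\tilde\lambda$ then has one factor $\mathbb{R}^2$ equal to the real part of the product of the two complex parameter lines for this conjugate pair; the rest of the argument proceeds as in the real case. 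Without this step your reduction to the single-critical-point case is incomplete.
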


\begin{proof} \looseness=-1 Suppose first that all critical points of $f_{\tilde \lambda}$ with critical value 0 are real. If $\tilde \lambda$ is sufficiently close to the origin in parameter space ${\mathbb R}^l$ of our versal deformation of function $f$, then a small open ball $U$ in this space with the center at point $\tilde \lambda$ is also the parameter space of a versal deformation of the multisingularity of function $f_{\tilde \lambda}$, formed by all these function germs $(f_{\tilde \lambda}, a_j)$. A miniversal deformation of such a multisingularity splits into the product of independent versal deformations of all singularities composing it, in particular its number of parameters is equal to $\sum \mu_j$. Therefore
our component $C \cap U$ of the complement of the discriminant set of this versal deformation of multisingularity $f_{\tilde \lambda}$ is ambient diffeomorphic to the direct product of some components $C_j$ of the complements of discriminant sets of all these singularities $(f_{\tilde \lambda},a_j)$, and additionally of space ${\mathbb R}^{l - \sum \mu_j}$. This diffeomorphism is realized by the restriction to $C \cap U$ of a~map inducing a deformation equivalent to our versal deformation of $f_{\tilde \lambda}$ from this miniversal one. (More precisely, this map is the projection map of a trivial fiber bundle, all whose fibers are diffeomorphic to ${\mathbb R}^{l - \sum \mu_j}$ and consist of functions, obtained one from another by diffeomorphisms of the argument space.)
Point $\tilde \lambda$ is sent by this diffeomorphism to the product of origins in the space ${\mathbb R}^{l - \sum \mu_j}$ and in all parameter spaces ${\mathbb R}^{\mu_j}$ of these deformations. Let us shift for any $j \neq 1$ the corresponding origin point of ${\mathbb R}^{\mu_j}$ to a non-discriminant point inside the corresponding component $C_j$. The obtained point of the product corresponds via our diffeomorphism to a point~$\tilde \lambda'$ in the boundary of our component $C \cap U$, such that $f_{\tilde \lambda'}$ has a single critical point with critical value~0, with
the same simple singularity class as $(f_{\tilde \lambda},a_1)$, and the localized Petrovskii class for a neighboring point $\overline{\lambda'} \in C$ again will be not equal to zero in group $\tilde H_{n-1}(V_{\overline{\lambda'}} \cap B_1 , V_{\overline{\lambda'}} \cap \partial B_1)$.

Suppose now that our function $f_{\tilde \lambda}$, $\tilde \lambda \in \partial C$, has imaginary critical points with value 0. By Proposition \ref{imag} there can be only one pair of such points, and these two critical points should be Morse. In this case we consider a very similar splitting of a neighbourhood of point $\tilde \lambda$, in which one factor ${\mathbb R}^2$ is not the parameter space of a real singularity of $f_{\tilde \lambda}$, but the real part of the product of parameter spaces of both its complex conjugate critical points; the rest of the consideration remains as previously.
\end{proof}

So, we can and will assume that $f_{\tilde \lambda}$ has only one critical point $a_1 \in B$ with critical value~0. By Proposition~\ref{prim} this critical point~$a_1$ is simple. A neighbourhood of point $\tilde \lambda$ in ${\mathbb R}^l$ can be considered as the base of a versal deformation of critical point~$a_1$ of function $f_{\tilde \lambda}$. Then by Theorem~\ref{v92} we may assume that either critical point~$a_1$ is Morse or it is of type~$A_2$ and our component~$C$ is the ``bigger'' one with respect to the corresponding cuspidal edge.

If this point is Morse, then component~$C$ contains a point $\overline{\lambda}$ such that function $f_{\overline{\lambda}}$ is Morse and satisfies the following

\begin{Condition}\label{cond1} The intersection index of the Petrovskii class in $\tilde H_{2}(V_{\overline{\lambda}}, \partial V_{\overline{\lambda}})$ with a vanishing cycle in $\tilde H_{2}(V_{\overline{\lambda}})$ defined by the segment in ${\mathbb R}^1 \subset {\mathbb C}^1$ connecting the non-critical value~$0$ with either the smallest positive or the largest negative critical value of $f_{\overline{\lambda}}$ is not equal to~$0$.
\end{Condition}

Indeed, this is true for all values $\overline{\lambda} \in C$ which are sufficiently close to $\tilde \lambda$ and correspond to Morse functions.

Accordingly to the explicit formulas for the Petrovskii classes (see \cite{Vassiliev86, APLT}) this condition is equivalent to the following one.

\begin{ConditionN}
 Either the positive inertia index of the critical point of $f_{\overline{\lambda}}$ with the smallest positive critical value is odd, or the positive inertia index of the critical point of $f_{\overline{\lambda}}$ with the largest negative critical value is even.
\end{ConditionN}

In the second case of $A_2$ singularity (assuming that Condition~\ref{cond1} is {\em not} satisfied for all points of $C$) our component $C$ contains points $\overline{\lambda}$ arbitrarily close to $\tilde \lambda$, such that $f_{\overline{\lambda}}$ is Morse and

\begin{Condition}\label{cond2}\quad
\begin{enumerate}\itemsep=0pt
\item[$1)$] $f_{\overline{\lambda}}$ has at least two real critical points with positive critical values;
\item[$2)$] positive inertia index of the quadratic part of the critical point of $f_{\overline{\lambda}}$ with the smallest positive critical value is even;
\item[$3)$] positive inertia index of the quadratic part of the critical point of $f_{\overline{\lambda}}$ with the next smallest positive critical value is exactly by~$1$ smaller than that for the previous one;

\item[$4)$] intersection index in $H_{2}(V_{\overline{\lambda}})$ of two vanishing cycles defined by simplest paths
\mbox{\begin{picture}(25,5)
\put(1,1){\circle{2}}
\put(-1.5,-6.5){\footnotesize $0$}
\put(13,1){\circle*{2}}
\put(25,1){\circle*{2}}
\put(12,1){\vector(-1,0){9.3}}
\bezier{120}(24,1.2)(13,6)(2.2,1)
\end{picture}}
connecting these critical values with $0$ in ${\mathbb C}^1$ is equal to $1$ or $-1$;

\item[$5)$] the intersection index of the odd Petrovskii class in $H_2(V_{\overline{\lambda}},\partial V_{\overline{\lambda}})$ with the vanishing cycle corresponding to the smallest positive critical value $($respectively, to the next smallest one$)$ is equal to~$0$ $($respectively, to~$1$ or $-1)$.
\end{enumerate}
\end{Condition}

It remains to prove that our component $C$ does not contain points~$\overline{\lambda}$ satisfying either of these two sets of conditions. To do it we apply the program~\cite{pro2} counting all topological types which the non-dis\-cri\-mi\-nant morsifications of function~$f$ can have.

\subsection{Description of the program} \label{propr}
For an extended description of this program see \cite[Section~V.8]{APLT} (although the version of the program quoted there is now obsolete, for the actual version see~\cite{pro2}). Let us remind its basic ideas.

Any generic real morsification $f_\lambda$ of a function singularity $f$ of corank $\leq 2$ in $n$ variables is characterized by a set of its topological invariants, including
\begin{enumerate}\itemsep=0pt
\item[a)] intersection indices of appropriately oriented and ordered vanishing cycles in $H_{n-1}(V_\lambda)$ corresponding to all its critical points (including the imaginary ones),
\item[b)] intersection indices of both local Petrovskii classes with all these vanishing cycles,
\item[c)] Morse indices of all real critical points (more precisely, the {\em positive} inertia indices of their quadratic parts) ordered by increase of their critical values,
\item[d)] the number of negative critical values, and
\item[e)] the number $n$ of variables (in fact only its residue modulo 4 is important).
\end{enumerate}

Any possible set of such data is called a {\em virtual morsification}.

If we know these data for an actual morsification~$f_\lambda$, then we can calculate them for all morsifications which can be obtained from~$f_\lambda$ by the standard surgeries, such as Morse surgeries of death/birth of real critical points of neighbouring indices, the change of the order in~${\mathbb R}^1$ of critical values at distant critical points, the passage of values at two distant imaginary critical points through the real axis, the jump of a critical value through~0 (this is the unique surgery changing the component of the complement of the discriminant), and the change of the choice of paths defining the cycles vanishing in imaginary critical points. Our algorithm starts from the virtual morsification corresponding to the initial perturbation $f_{\lambda} \in C$ and applies to it all possible chains of transformations of virtual morsifications modelling the possible surgeries of actual morsifications. The newborn virtual morsifications are compared with all found previously ones, and are added to the list if they are new. A priori this algorithm can provide virtual morsifications not corresponding to any actual one; however it surely finds all virtual morsifications corresponding to all actual ones.

\subsection{Modification of program \cite{pro2} for our purposes}\label{modif}

To apply this program to any of our five singularities, we calculate (by hands) data (a)--(d) for real morsification $f_\lambda$ from our component $C$ described in Section~\ref{theex}, put these data to our program, switch out in it the surgery of jumping of critical values through~$0$ (since we are interested only in the morsifications from component~$C$) and additionally include alarm operators aborting the program and typing the corresponding message if a newborn virtual morsification satisfies one of two conditions listed in Section~\ref{condit}.

More precisely, we take program \cite{pro2} and do the following:
\begin{enumerate}\itemsep=0pt
\item[a)] make in lines 1--7 changes described in lines 12--18 (the Milnor number of all $X_9$ singularities is equal to 9, and that of $J_{10}$ is equal to 10),
\item[b)] put the command ${\rm NPOZC}=4$ in line 37 for singularities $+X_9$ and $J_{10}^1$, ${\rm NPOZC}=3$ for~$X_9^1$ and~$J_{10}^3$, and ${\rm NPOZC}=2$ for $X_9^2$: this is the number of critical points with negative critical values for our starting morsification specifying component~$C$,
\item[c)] uncomment operators ${\rm L(MD+1)}=1$ and ${\rm L(MDD)}=1$ in lines~115,~116 (i.e., we disable the surgeries of crossing the discriminant, which correspond to the jumps of real critical values through~0),
\item[d)] insert the next text immediately after operator 2318 CONTINUE:
\begin{gather}
\begin{split}
 &\mbox{IF(PC1(NPOZC).NE.0)\ GOTO \ 7343}\\
 &\mbox{NPO1=NPOZC+1}\\
 & \mbox{IF(PC1(NPO1).NE.0)\ GOTO \ 7343}\\
 & \mbox{NPO2=NPOZC+2}\\
 & \mbox{IF(PC1(NPO2).NE.0.AND.INDC(NPO2).LT.INDC(NPO1)) \ GOTO \ 7343}
 \end{split}\label{pro3}
\end{gather}
(the first two operators ``IF'' here check Condition~\ref{cond1} above, and the third one detects items~3 and (partly)~5 of Condition~\ref{cond2}; on the address 7343 the program types the word ALARM and data of the virtual morsification for which one of these conditions was detected, and then terminates its work);
\item[e)] calculate the intersection indices of vanishing cycles of morsification~$f_\lambda$ described in Section~\ref{theex} using the Gusein-Zade--A'Campo method, and substitute all obtained {\em non-zero} indices $\langle\Delta_i,\Delta_j\rangle$, $i<j$, into the subroutine DATA at the very end of the program: e.g., if $\langle\Delta_3,\Delta_5\rangle=1$ then we insert operator $C(3,5)=1$ there. Also, if the positive inertia index of the quadratic part of the~$i$th (in the order of increase of critical values) critical point of our morsification is equal to some number $q \neq 2$, then we put ${\rm INDC}(i)=q$ in the last part of this subroutine. These indices for the critical points of functions~(\ref{tfi}) for our corank~2 singularities shown in Figs.~\ref{X90}--\ref{J101} are equal to~3 at minima of $\tilde \varphi$, 1 at maxima, and~2 at saddlepoints.
\end{enumerate}

All programs modified in this way and corresponding to our five corank~2 parabolic singularities can be found by the address
\url{https://drive.google.com/drive/folders/1L5p_HOvrBbcyBv-o4Ov4S_sZBbJytdxr?usp=sharing}.

The result of their work is negative: in all five cases the program detects that component $C$ does not contain points satisfying either of Conditions~\ref{cond1} or~\ref{cond2}.
This proves Proposition~\ref{mp}, and hence also Statement~2 of Theorem~\ref{maint} for our five singularities of corank~2.

\section[Singularities $P_8$]{Singularities $\boldsymbol{P_8}$}\label{p8}

In the case of singularities of corank $>2$ we need to apply a different program~\cite{pro}.
Indeed, we generally cannot predict the Morse indices of the newborn real critical points in the Morse birth surgery, if we know only the data of the virtual morsification before the birth: we can predict only parities of these indices. In the case of functions of corank 2, when the choice is between the pairs of neighboring Morse indices~$(0,1)$ or~$(1,2)$ of the new critical points of essential part~$\varphi_\lambda$ of the morsification, this is enough to obtain the exact values of the indices, but for the functions of greater coranks we need to use a program in which the Morse indices take only the values odd/even.

Fortunately, this is enough for our problem concerning the simplest singularities of corank 3. Other changes and features in these cases are as follows.

\subsection[Singularity class $P_8^1$]{Singularity class $\boldsymbol{P_8^1}$}

This class is represented by function $f \equiv \varphi(\xi_1, \xi_2, \xi_3) + \xi_4^2- \xi_5^2$, where $\varphi=\xi_1^3 + \xi_2^3 + \xi_3^3$. We consider the standard monomial versal deformation of this function depending on parameter $\lambda=(\lambda_1, \dots, \lambda_8)$,
\begin{gather}\label{verp8}
f_\lambda(\xi) \equiv f(\xi) + \lambda_1 + \lambda_2\xi_1+\lambda_3\xi_2+\lambda_4\xi_3+\lambda_5\xi_1\xi_2 +\lambda_6 \xi_1\xi_3 + \lambda_7 \xi_2\xi_3 +\lambda_8 \xi_1\xi_2\xi_3 .
\end{gather}

The stratum of points of type $P_8$ is represented in it by the $\{\lambda_8\}$ axis. The multiplicative group of positive numbers acts on parameter space~${\mathbb R}^8$ of this deformation: any number $t$ sends a function $f_\lambda(x)$ to $t^{3}f_\lambda(x/t)$. In other words, $t(\lambda_1, \lambda_2, \lambda_3, \lambda_4, \lambda_5, \lambda_6, \lambda_7, \lambda_8)=\big(t^3\lambda_1, t^2\lambda_2, t^2\lambda_3, t^2 \lambda_4, t\lambda_5, t\lambda_6, t\lambda_7, \lambda_8\big)$. This action preserves the discriminant and allows us not to take care of the smallness of the perturbations except for parameter $\lambda_8$.

Let us construct a perturbation of function $f$, contained in component $C$ of the complement of the discriminant of this deformation, which will satisfy Statement~2 of Theorem~\ref{maint}.

Function $\varphi$ has a convenient morsification $\tilde \varphi \equiv \xi_1^3 + \xi_2^3 + \xi_3^3 - 3 (\xi_1 + \xi_2 +\xi_3)$ with eight real critical points: namely, it are all points with $\xi_i = \pm 1$. Let be $\tilde f \equiv \tilde \varphi + \xi_4^2-\xi_5^2$. The intersection indices of vanishing cycles in $H_{4}\big(\tilde f^{-1}(0)\big)$ related with these critical points can be calculated by the method of \cite{Gab}.

Consider family $\varphi_{(\tau)}$ of perturbations of $\varphi$ depending on parameter $\tau \in [0,1]$,
\begin{gather}\label{famil}
\varphi_{(\tau)} \equiv \varphi(\xi_1, \xi_2, \xi_3) + 6\tau(\xi_1\xi_2 +\xi_1\xi_3 + \xi_2\xi_3) - (3+12\tau)(\xi_1+\xi_2+\xi_3) .
\end{gather}
All functions of this family are invariant under the permutations of coordinates $\xi_1$, $\xi_2$, $\xi_3$, and have the fixed critical point~$(1,1,1)$. This family connects the morsification $\tilde \varphi \equiv \varphi_{(0)}$ with a~function having a singularity of class~$D_4^-$ at this point. The topological type of morsifications does not change along this path (except for the very last point), in particular the intersection indices of vanishing cycles and the Morse indices of critical points of the corresponding functions $f_{(\tau)} \equiv \varphi_{(\tau)} + \xi_4^2-\xi_5^2$ for nearly final values of $\tau \approx 1$ will be the same as for $\tau=0$.

A neighbourhood of the final function $\varphi_{(1)}$ in the parameter space of the versal deformation~(\ref{verp8}) can be considered as the parameter space of a versal deformation of the corresponding multisingularity consisting of this point of type $D_4^-$ and four distant Morse points, so we can deform these critical points independently.

Four Morse critical points of $\varphi_{(\tau)}$, $\tau =1-\varepsilon$, tending to the collision in this point of type~$D_4^-$, consist of three points with signature~$(2,1)$ and equal critical values, and one minimum point with a slightly smaller critical value tending to $-24$ when $\tau$ tends to~$1$. We can perform two standard Morse surgeries over them, first colliding this minimum point with one point of signature $(2,1)$, and then returning these critical points to the real domain as two points with signatures $(2,1)$ and $(1,2)$ and critical values slightly greater than these at two points of signature $(2,1)$ not participating in these surgeries. Composition of these two surgeries realizes the passage
{\small
\unitlength=0.8mm
$\mbox{\begin{picture}(52,13)
\put(0,10){\line(1,0){16}}
\put(3,13){\line(1,-2){7}}
\put(13,13){\line(-1,-2){7}}
\put(6.8,6.2){{\footnotesize $-$}}
{\Large \put(21,5){$\leftrightarrow$}}
\put(32,1){\line(1,0){15}}
\put(36,13){\line(1,-2){7}}
\put(41,13){\line(-1,-2){7}}
\put(37,2.5){{\footnotesize $+$}}
\end{picture}}$}
between the perturbations of a $D_4^-$-singularity. Other four critical points of $\varphi_{(1-\varepsilon)}$ can be perturbed very little during this surgery.

The resulting function has exactly three critical points of signature~$(2,1)$, which have the smallest critical values among all critical points of this function. Let us add a constant to this function in such a way that these three critical values of the obtained function $\overline{\varphi}$ will remain negative, and all the other ones become or remain positive. We claim that the component of the complement of the discriminant of the deformation~(\ref{famil}) containing function $\overline{f}\equiv \overline{\varphi} + \xi_4^2-\xi_5^2$ satisfies an analog of Proposition~\ref{mp}: the local Petrovskii condition holds in this component at all discriminant points of its boundary except for the most singular points of type~$P_8^1$.

The proof of this claim almost repeats that of Proposition~\ref{mp}.

In particular, the exact analogs of Proposition~\ref{imag} and Lemma~\ref{lem8} hold in this case together with their proofs: sums of indices of the vector field $\operatorname{grad} \overline{\varphi}$ over the critical points with negative (respectively, positive) critical values are equal to~$-3$ and~$3$.

The topological data of the obtained morsification~$\overline{f}$ can be easily derived from these for morsification $\varphi_{(1-\varepsilon)}$ (e.g., with the help of our program~\cite{pro}). Then
 we substitute these data to a modification of our program, see the concluding subroutine DATA in program mP81 from the folder quoted in Section~\ref{modif}. This program is obtained from program~\cite{pro} by almost the same changes by which we obtained in Section~\ref{modif} similar programs for corank~2 singularities from program~\cite{pro2}, with only the following exceptions.
The operator detecting cuspidal edge (see the fifth line in~(\ref{pro3})) should be slightly changed, because we generally do not know the integer values of Morse indices of critical points. Instead we write the command
\[\mbox{IF(PC1(NPO2).NE.0.AND.C(NPO1,NPO2).EQ.1) GOTO 7343}\]
checking items~4 and~5 of Condition~\ref{cond2} (we use a definite choice of orientations of vanishing cycles corresponding to real critical points, which allows us to fix the sign of their intersection index). Also, we write ${\rm INDC}(q)=-1$ in the concluding subprogram DATA for all numbers~$q$ of critical values of the function $\overline{\varphi}$ corresponding to critical points with {\em odd} positive inertia indices of quadratic parts; we write ${\rm NPOZC}=3$ in line~37 and activate operators ${\rm L}({\rm MD}+1)=1$ and ${\rm L(MDD)}=1$ in lines~115 and~116. At the beginning of this subprogram we preserve operator $\mbox{N}=-1$ (which means that number $n= 5$ of variables is as odd as the number~3 considered previously) but then write $\mbox{N2}=1$ (which means that number $n(n-1)/2$ for $n=5$ is even, unlike the case of $n=3$).

The obtained program proves that our component $C$ of the complement of the discriminant does not contain any morsifications satisfying Conditions~\ref{cond1} or~\ref{cond2} from Section~\ref{condit}, thus proving Statement~2 of Theorem~\ref{maint} for singularity class~$P_8^1$.

\subsection[Singularity $P_8^2$]{Singularity $\boldsymbol{P_8^2}$}

In this case we take initial morsification $\varphi_{\tilde \lambda} (\xi_1, \xi_2, \xi_2) + \xi_4^2-\xi_5^2$, where function $\varphi_{\tilde \lambda}$ in three variables is described in Section~7 of~\cite{V16} (and is called~$f_2$ there). This function $\varphi_{\tilde \lambda}$ has four critical points with signature of the quadratic part equal to $(2,1)$ and critical values equal to~0,~0,~0, and~1. In addition it has four critical points with signature $(1,2)$; the values at three of them are slightly greater than~$0$, and the value at the fourth one is also equal to~1. Let us subtract a very small constant from this function, making critical values at the first three critical points negative, and leaving the remaining five values positive. According to~\cite{V16}, the intersection matrix of corresponding vanishing cycles is then expressed by formula~(5) from~\cite{V16} with $X=0$, $Y=1$, $Z=0$, $W=-2$. Let us substitute these topological data into the same version of program~\cite{pro} which was used for $P_8^1$, again write $\mbox{NPOZC}=3$ in line~37, and run the obtained program~mP82.

It gives us a disappointing answer: one can see the word ALARM in the print-out of this program, followed by the data of a virtual morsification, for which Condition~\ref{cond1}$'$ is satisfied: namely, the critical point with the smallest positive critical value has an odd positive inertia index of the quadratic part.

We will show now that this is not dangerous for us. Indeed, there are the following two possible interpretations of this calculation.

1. All chains of admissible virtual surgeries leading to the virtual morsifications satisfying Condition~\ref{cond1}$'$, are fake (i.e., cannot be realized by chains of real surgeries arising along any generic paths in the parameter space of a~versal deformation). In this case the component of the complement of the discriminant, which contains our morsifications, is the desired one. Indeed, let us modify our program~mP82 in such a way that it checks only Condition~\ref{cond2} but forgives Condition~\ref{cond1}$'$ (i.e., we disable the first two operators IF in~(\ref{pro3})). The obtained program assures us that the morsifications satisfying this Condition~\ref{cond2} indeed do not occur in this component (while the morsifications satisfying Condition~\ref{cond1}$'$ do not occur by our conjecture).

2. If the conjecture of the previous paragraph is wrong, and our component contains real morsifications satisfying Condition~\ref{cond1}$'$, then there is a neighbouring component satisfying the assertion analogous to Proposition~\ref{mp} (i.e., such that the localized Petrovskii condition is satisfied in this component at all discriminant points of its boundary except for the most singular points of class~$P_8^2$). Indeed, any morsification of~$f$ from our component has at least three critical points with negative critical values and odd positive inertia indices, and also at least three points with positive critical values and even negative inertia indices (see the proof of Proposition~\ref{imag}). Moreover, quantities of all critical points with negative (or positive) critical values are odd (and hence equal to~3 or~5). Therefore if we have a real morsification in this component, satisfying Condition~\ref{cond1}$'$, then we can add to it a constant function in such a way that the resulting morsification has exactly four critical points with negative values and odd positive inertia indices, and four critical points with positive critical values and even positive inertia indices.

Any component of the complement of the discriminant, which contains such a morsification, is a desired one. Indeed, no Morse surgery can be performed over functions inside this component, hence {\em all} morsifications from it have the same signatures, and Conditions~\ref{cond1}$'$ or~\ref{cond2} cannot be satisfied for them.

\subsection*{Acknowledgements}

This work was supported by Russian Science Foundation grant 16-11-10316.

\pdfbookmark[1]{References}{ref}
\LastPageEnding

\end{document}